\numberwithin{equation}{section}
\numberwithin{figure}{section}
\theoremstyle{plain}
\newtheorem{thm}{Theorem}[section]
\theoremstyle{definition}
\newtheorem{defn}[thm]{Definition}
\newtheorem*{claim}{Claim}
\theoremstyle{plain}
\theoremstyle{remark}
\newtheorem{rem}[thm]{Remark}
\theoremstyle{plain}
\newtheorem{cor}[thm]{Corollary}
\theoremstyle{plain}
\newtheorem{lem}[thm]{Lemma}
\theoremstyle{remark}
\newtheorem*{rem*}{Remark}
\newcommand{\abs}[1]{\left\vert#1\right\vert}
\newcommand{\set}[1]{\left\{#1\right\}}
\newcommand{\Cpx}{\mathbb{C}}
\newcommand{\Qua}{\mathbb{H}}
\newcommand{\half}{\frac{1}{2}}
\newcommand{\sph}{\mathbb{S}}
\newcommand{\Hil}{\mathbf{S}}
\newcommand{\fts}{\footnotesize}
\newcommand{\dive}{\mathrm{div}}
\newcommand{\Ric}{\mathrm{Ric}}
\newcommand{\Tr}{\mathrm{tr}}
\newcommand{\vol}{\mathrm{vol}}
\newcommand{\Hess}{\mathrm{Hess}}
\newcommand{\sfG}{\mathsf{G}}
\newcommand{\sfK}{\mathsf{K}}
\newcommand{\sfSU}{\mathsf{SU}}
\newcommand{\sfU}{\mathsf{U}}
\newcommand{\sfSO}{\mathsf{SO}}
\newcommand{\Spin}{\mathsf{Spin}}
\newcommand{\sfSp}{\mathsf{Sp}}
\newcommand{\sfE}{\mathsf{E}}
\newcommand{\sfF}{\mathsf{F}}
\title[Linear stability of $\nu$-entropy on symmetric spaces]{Linear stability of Perelman's $\nu$-entropy \\ on symmetric spaces of compact type}
\author{Huai-Dong Cao}
\address{
Huai-Dong Cao \\
Department of Mathematics, Lehigh University \\
Bethlehem, PA, 18015}
\email{huc2@lehigh.edu}
\author{Chenxu He}
\address{
Chenxu He \\
Department of Mathematics, University of Oklahoma \\
Norman, OK, 73019}
\email{che@math.ou.edu}
\subjclass[2000]{53C25, 53C35, 53C44. \\
The research of the first author was partially supported by NSF Grant DMS-0909581.}
\begin{document}

\maketitle

\begin{abstract}
Following \cite{CaoHamiltonIlmanen}, in this paper we study the linear stability of Perelman's $\nu$-entropy on Einstein manifolds with positive Ricci curvature. We observe the equivalence between the linear stability restricted to the transversal traceless symmetric 2-tensors and the stability of Einstein manifolds with respect to the Hilbert action. As a main application, we give a full classification of linear stability of the $\nu$-entropy on symmetric spaces of compact type.  In particular, we exhibit many more linearly stable and linearly unstable examples than previously known and also the first  linearly stable examples, other than the standard spheres, whose second variations are negative definite. 
\end{abstract}

\section{Introduction}

Hamilton's Ricci flow can be regarded as a dynamical system on the space of Riemannian metrics over a smooth manifold $M^n$ modulo diffeomorphisms and scalings. Einstein metrics,  or more generally Ricci solitons,  can be viewed as fixed points in the system. When $M^n$ is compact, G. Perelman \cite{Perelman}  introduced the $\mathcal{W}$-functional 
\[
\mathcal{W} (g, f, \tau) = \int_M \left[ \tau \left(R + \abs{\nabla f}^2\right) + f - n \right] (4\pi \tau)^{-\frac{n}{2}} e^{-f}dV,
\]
where $g$ is a Riemannian metric, $R$ the scalar curvature, $f$ a smooth function on $M$ and $\tau$ a positive scale parameter. The associated $\nu$-entropy is defined by 
\[
\nu(g) = \inf \set{\mathcal{W}(g,f, \tau) : f\in C^{\infty}(M), \tau > 0, (4\pi \tau)^{-\frac{n}{2}}\int_M e^{-f}dV = 1}.
\]
A remarkable fact of the $\nu$-entropy showed by Perelman is that it is monotone increasing under the Ricci flow, and its critical points are precisely shrinking gradient Ricci solitons defined by the equation
\[ 
\Ric+\nabla^2 f=\frac {1} {2\tau} g
\] 
for some potential function $f$ on $M$.  Clearly, when potential functions are constant shrinking gradient Ricci solitons reduce to positive Einstein manifolds.

 In \cite{CaoHamiltonIlmanen} Hamilton, Ilmanen and the first author initiated the study of linearly stability of Einstein metrics. In particular, they found the second variation formula of the $\nu$-entropy for Einstein manifolds with positive Ricci curvature and investigated the linear stability of certain Einstein manifolds. An Einstein metric is called \emph{linearly stable} if the second variation of the $\nu$-entropy is semi-negative definite. Otherwise it is called \emph{linearly unstable}. They showed that though the standard spheres and complex projective spaces are linearly stable, many known Einstein manifolds are unstable along the Ricci flow so that a generic perturbation around these Einstein metrics acquire higher $\nu$-entropy and hence the flow can never return near the original metrics. 

On the other hand, Einstein metrics of volume $1$ on a closed manifold can be characterized variationally as the critical points of the Hilbert action \cite{Hilbert}, which associates each Riemannian metric  $g$ of volume $1$ the integral of its scalar curvature:
\begin{equation*}
\Hil(g) = \int_M R_g dV.
\end{equation*}
The gradient vector of the Hilbert action with respect to the natural $L^2$ metric is precisely the negative of the traceless Ricci tensor. This variational approach of finding Einstein metrics, for example on compact homogeneous spaces, has been studied by C. B\"{o}hm, M. Wang and W. Ziller in \cite{BoehmWangZiller}. It is well-known that the second variation of the Hilbert action on the standard spheres vanishes under certain conformal deformation. Other than this case, when the Einstein constant is positive, the integral of scalar curvature is increasing if the metric is deformed conformally with unit volume. It raises an interesting question for the stability of the Hilbert action when the metric is deformed in other directions. The standard spheres and the complex projective spaces are examples where the integral is non-increasing when the metrics are perturbed in directions transversal to conformal variations. 

We denote by $C^\infty(S^2(T^*M))$ the space of all covariant symmetric $2$-tensors on $M$.  When $M$ is a compact Einstein manifold other than the standard sphere, we have the following decomposition \cite[Lemma 4.57]{Besse}
\begin{equation}\label{eqn:S2Mdecomposition}
C^{\infty}(S^2(T^*M)) = \mathrm{Im} \dive^* \oplus \mathscr{C}M \cdot g \oplus \left(\ker \dive \cap \ker \Tr\right)
\end{equation} 
where $\mathscr{C}M$ denotes the set of smooth functions on $M$, $\dive$ and $\Tr$ stand for divergence and trace operators respectively. Since the infinitesimal deformation of a Riemannian metric defines a covariant symmetric $2$-tensor on $M$, it is natural to consider the stability on each factor.
The first factor represents deformations by diffeomorphisms. Since the $\nu$-entropy and the Hilbert action are invariant under the diffeomorphisms, the second variations vanish on this factor. The second factor represents conformal deformations and the third factor consists of all transversal traceless symmetric $2$-tensors. Note that $h\in C^\infty(S^2(T^*M))$ is called \emph{transversal} if its divergence vanishes. The second variation formula of the Hilbert action on an Einstein metric restricted to the last two factors is well-known, see for example Theorem 4.60 in \cite{Besse}. The second variation of Perelman's $\nu$-entropy on the second and third factors has been discussed in \cite{CaoHamiltonIlmanen}. More precisely, for $g(t) = g + t h$ with $h \in C^{\infty}(S^2(T^* M))$ write the second variation as 
\[
\delta^2 \nu_g (h,h) = \frac{d^2}{dt^2}\Big{|}_{t=0} \nu(g+t h) = \frac{1}{2\lambda \vol(M, g)}\int_M \langle Nh, h\rangle dV
\]
where $\lambda>0$ is the Einstein constant and  $N$ is a certain self-adjoint operator acting on symmetric $2$-tensors which is closely related to the Lichnerowicz Laplacian, see the formula of $N$ in section 3 or \cite{CaoHamiltonIlmanen}. Then we have the following

\begin{thm}[Cao-Hamilton-Ilmanen]\label{thm:CHI2ndvariation}
Let $(M^n, g)$ be a compact Einstein manifold other than the standard sphere, with $\Ric = \lambda g$ and $\lambda>0$. Then the decomposition (\ref{eqn:S2Mdecomposition}) is orthogonal with respect to the second variation $\delta^2 \nu_g$ of $\nu$-entropy.  Moreover 
\begin{enumerate}
\item the first factor $\mathrm{Im}\dive^*$ is contained in the null space of $N$; 
\smallskip
\item $\delta^2 \nu_g$ is positive for some $h \in \mathscr{C}M \cdot g$ if and only if the first eigenvalue $\mu_{\text{fns}}$ of Laplacian on functions satisfies
\[
 -2\lambda   < \mu_{\text{fns}} <-\frac{n}{n-1}\lambda ;
\]  
\item for any transversal traceless symmetric $2$-tensor $h$ we have
\[
\delta^2\nu_g(h,h) = \half \int_M \langle (\Delta_L + 2\lambda) h, h\rangle dV,
\]
where $\Delta_L h = \Delta h + 2Rm(h, \cdot)-\Ric \circ h-h\circ \Ric$  is the Lichnerowicz Laplacian acting on symmetric $2$-tensors. 
\end{enumerate}
\end{thm}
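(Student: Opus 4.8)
The plan is to compute $\delta^2\nu_g(h,h)$ by differentiating $t\mapsto\nu(g+th)$ twice at $t=0$, exploiting that $\nu(g)$ is an infimum over $(f,\tau)$. At the Einstein metric $g$ with $\Ric=\lambda g$ the infimum is realized by a constant $\bar f$ and by $\bar\tau=\tfrac1{2\lambda}$, so that $g$ is a shrinking gradient Ricci soliton and hence a critical point of $\nu$; consequently $\delta^2\nu_g$ is a well-defined symmetric bilinear form on $C^\infty(S^2(T^*M))$, independent of the chosen curve. Adjoining a Lagrange multiplier $\mu$ for the normalization and writing $\mathcal L(g,f,\tau,\mu)$ for the resulting functional, one checks that the basepoint minimizer $(\bar f,\bar\tau,\bar\mu)$ is a nondegenerate critical point of $\mathcal L$ on the constraint set — the $f$-block being automatically nondegenerate from the Lichnerowicz estimate, since $-\lambda$ is never an eigenvalue of $\Delta$ on functions — so that the minimizer $(f_t,\tau_t,\mu_t)$ depends smoothly on $t$. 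By the envelope theorem, $\delta^2\nu_g(h,h)$ equals the pure $g$-Hessian $\partial_g^2\mathcal L[h,h]$ at the frozen basepoint minimizer, plus bilinear correction terms pairing $h$ against the first-order motion $(\dot f,\dot\tau,\dot\mu)$ of the minimizer; equivalently, a Schur-complement expression whose correction is $\le0$ on the subspace where the minimizer moves.

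The next step is to carry out the linearization. Linearizing the Euler--Lagrange system for $(f,\tau)$ at the basepoint yields $\dot f$ as the solution of an equation $(2\bar\tau\Delta+c_0)\dot f=\bar\tau\,\delta R_g[h]+c_1$, where $\delta R_g[h]=-\Delta\Tr h+\dive\dive h-\langle\Ric,h\rangle$ is the linearized scalar curvature, and $\dot\tau$ as an explicit multiple of $\int_M\Tr h\,dV$ and $\int_M\delta R_g[h]\,dV$, with $c_0,c_1$ fixed along the way. Substituting these back and integrating by parts, the terms contributed by the moving minimizer assemble into the lower-order part of the answer and one obtains
\[
\delta^2\nu_g(h,h)=\frac1{2\lambda\,\vol(M,g)}\int_M\langle Nh,h\rangle\,dV,
\]
with $N$ the self-adjoint operator recorded in Section~3. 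I expect this to be the main obstacle: $\dot f$ must be solved for by inverting $2\bar\tau\Delta+c_0$, and every integration by parts has to be tracked for the Lichnerowicz Laplacian and the precise constants to emerge; the envelope/Schur-complement framework keeps the picture clean, but the algebra is where the work lies.

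The three assertions follow from the explicit $N$ together with a few structural remarks. For (1): if $h=\dive^*\omega$ then $h=\pm\tfrac12\mathcal L_{\omega^\sharp}g$, so $\delta R_g[h]=\mathcal L_{\omega^\sharp}R=0$ since $R\equiv n\lambda$, and a direct computation shows the remaining contributions to $N(\dive^*\omega)$ cancel on an Einstein manifold; alternatively, diffeomorphism invariance of $\nu$ forces $\delta^2\nu_g$ to vanish on $\mathrm{Im}\,\dive^*$. Hence $\mathrm{Im}\,\dive^*\subset\ker N$, and self-adjointness of $N$ makes the first factor $\delta^2\nu_g$-orthogonal to the other two. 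For the orthogonality of $\mathscr C M\cdot g$ and $\ker\dive\cap\ker\Tr$: $N(ug)$ has the shape $\phi\,g+\Hess\,\psi$ for suitable functions $\phi,\psi$, so for transversal traceless $h'$ one has $\int_M\langle\phi\,g,h'\rangle\,dV=\int_M\phi\,\Tr h'\,dV=0$ and $\int_M\langle\Hess\,\psi,h'\rangle\,dV=\int_M\psi\,\dive\dive h'\,dV=0$.

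For (2), take $h=ug$: then $\Tr h=nu$, $\dive h=du$, and $\delta R_g[ug]=-(n-1)\Delta u-n\lambda u$, so feeding this through $N$ and pairing with $ug$ reduces $\delta^2\nu_g(ug,ug)$ to $c\int_M u\,P(\Delta)u\,dV$ for an explicit rational function $P$ of the Laplacian (rational because of the inversion of $2\bar\tau\Delta+c_0$). Expanding $u$ in eigenfunctions of $\Delta$ — the constant mode contributing $0$, as it must by the scale invariance $\nu(cg)=\nu(g)$ — the form is positive for some $u$ exactly when $P(\mu)>0$ for some nonzero eigenvalue $\mu$, which a short computation identifies with $-2\lambda<\mu<-\tfrac n{n-1}\lambda$; since Lichnerowicz--Obata gives $\mu_{\text{fns}}<-\tfrac n{n-1}\lambda$ strictly for $M$ other than the round sphere, this becomes the stated condition on the first eigenvalue. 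Finally, for (3), a transversal traceless $h$ has $\Tr h=0$ and $\dive h=0$, hence $\delta R_g[h]=0$ and $\dot f=\dot\tau=0$: the minimizer is stationary to first order, so $\delta^2\nu_g(h,h)=\partial_g^2\mathcal L[h,h]$ at $(\bar f,\bar\tau,\bar\mu)$, which is a fixed multiple of the classical second variation of $\int_M R\,dV$ plus a multiple of $\delta^2\!\big(\int_M dV\big)[h]=-\tfrac12\int_M\abs{h}^2\,dV$. Inserting the second variation of the total scalar curvature on transversal traceless tensors at an Einstein metric \cite[Theorem~4.60]{Besse} and using $\bar\tau=\tfrac1{2\lambda}$, the lower-order pieces combine into $\delta^2\nu_g(h,h)=\tfrac12\int_M\langle(\Delta_L+2\lambda)h,h\rangle\,dV$; in other words, on transversal traceless tensors $N$ reduces to a constant multiple of $\Delta_L+2\lambda$, which is the equivalence with Einstein--Hilbert stability highlighted in the abstract.
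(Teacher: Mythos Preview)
Your sketch is correct and arrives at the same conclusions, but the route differs from the paper's in two notable places.

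First, you spend the opening paragraphs re-deriving the operator $N$ via an envelope/Schur-complement argument. The paper simply quotes the formula for $N$ from \cite{CaoHamiltonIlmanen} (with the detailed computation deferred to \cite{CaoZhu}) and works from there. Your derivation is not wrong, but it is extra content relative to what the theorem actually asserts.

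Second, and more substantively, your treatment of the conformal block (part~(2)) is a direct plug-in: you compute $N(ug)$ from the formula, solve $(\Delta+\lambda)v_h=\Delta u$ mode-by-mode, and obtain a rational function $P(k)=\dfrac{((n-1)k+n\lambda)(k+2\lambda)}{2(k+\lambda)}$ of the eigenvalue. The paper instead introduces the auxiliary tensor $S(u)=(\Delta u)g-\Hess u+\lambda u\,g$, observes that $S(u)$ is divergence-free with $\Delta_L S(u)=S(\Delta u)$, and writes $fg=S(u)+\Hess u$ with $f=(\Delta+\lambda)u$; since $\Hess u\in\mathrm{Im}\,\dive^*$ is killed by $N$, this reduces $N(fg)$ to a purely algebraic expression in $S$, yielding the \emph{polynomial} $(k+\lambda)((n-1)k+n\lambda)(k+2\lambda)$ per mode of $u$. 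The two answers agree after the change of variable $f\leftrightarrow(\Delta+\lambda)u$, but the $S(u)$ trick avoids the explicit inversion of $\Delta+\lambda$ and keeps everything polynomial; your approach is more pedestrian but perhaps easier to discover.

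For part~(1) you invoke diffeomorphism invariance as an alternative to direct computation. That is legitimate---differentiating $\nu(\phi_s^*g(t))=\nu(g(t))$ in $s$ then $t$ at a critical metric gives $\delta^2\nu_g(\mathcal L_X g,h)=0$ for all $h$, hence $N(\mathcal L_X g)=0$ by self-adjointness---but the paper instead proves $Nh=0$ by a bare-hands tensor calculation (its Lemma~3.4), first identifying $v_h=\Tr h$ and then checking the remaining terms cancel. For the orthogonality of $\mathscr C M\cdot g$ and $\ker\dive\cap\ker\Tr$ you apply $N$ to the conformal factor and pair with a TT tensor; the paper goes the other way, applying $N$ to the TT tensor (where it reduces to $\tfrac12\Delta+Rm(\cdot,\cdot)$) and pairing with $ug$. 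These are equivalent by self-adjointness. Part~(3) is handled identically in both.
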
 
\begin{rem}
(1) Note that the bound  $$ \mu_{\text{fns}}<-\frac{n}{n-1}\lambda$$ follows from the Lichnerowicz bound of the first eigenvalue of Laplacian on positive Einstein manifolds and the rigidity theorem by M. Obata, see \cite{Lichnerowicz} and \cite{Obata}.  \\
(2) The results in this theorem are essentially contained in \cite{CaoHamiltonIlmanen}. Here we use the decomposition (\ref{eqn:S2Mdecomposition}) of symmetric $2$-tensors to formulate it in the way above.   
\end{rem}

We denote by $\mu_L$ the maximal eigenvalue of the Lichnerowicz Laplacian on transversal traceless symmetric $2$-tensor. A linearly stable Ricci soliton is called \emph{neutrally linearly stable} if the second variation vanishes for some nonzero $h$ not in $\mathrm{Im}\dive^*$. Theorem \ref{thm:CHI2ndvariation} has the following 

\begin{cor}
Let $(M, g)$ be  an Einstein manifold with Ricci curvature $\lambda > 0$ other than the standard sphere. Then $g$ is linearly stable with respect to Perelman's $\nu$-entropy if and only if $\mu_{\text{fns}} \leq -2\lambda$ and $\mu_L \leq -2 \lambda$. It is neutrally linearly stable if it is linearly stable and either $\mu_{\text{fns}}=-2\lambda$ or $\mu_L=-2\lambda$. 
\end{cor}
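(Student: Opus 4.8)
The plan is to derive the Corollary as a direct consequence of Theorem~\ref{thm:CHI2ndvariation} by analyzing the sign of $\delta^2\nu_g$ on each of the three factors in the decomposition~\eqref{eqn:S2Mdecomposition} and then combining them via orthogonality. Since the decomposition is $\delta^2\nu_g$-orthogonal, the quadratic form $\delta^2\nu_g$ is semi-negative definite (i.e.\ $g$ is linearly stable) if and only if it is semi-negative definite on each summand separately. I would treat the three factors in turn.

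First I would dispense with the factor $\mathrm{Im}\,\dive^*$: by part~(1) of Theorem~\ref{thm:CHI2ndvariation} it lies in the null space of $N$, so $\delta^2\nu_g$ vanishes identically there and imposes no constraint. Next, for the conformal factor $\mathscr{C}M\cdot g$, part~(2) says $\delta^2\nu_g$ is positive for some $h\in\mathscr{C}M\cdot g$ precisely when $-2\lambda<\mu_{\text{fns}}<-\tfrac{n}{n-1}\lambda$; by the Remark following the theorem (Lichnerowicz--Obata), the upper inequality $\mu_{\text{fns}}<-\tfrac{n}{n-1}\lambda$ always holds on a positive Einstein manifold that is not the standard sphere. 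Hence $\delta^2\nu_g$ fails to be semi-negative definite on the conformal factor exactly when $\mu_{\text{fns}}>-2\lambda$, and it is semi-negative definite there if and only if $\mu_{\text{fns}}\le -2\lambda$. Finally, for the transversal traceless factor, part~(3) gives $\delta^2\nu_g(h,h)=\tfrac12\int_M\langle(\Delta_L+2\lambda)h,h\rangle\,dV$; since $\Delta_L$ is self-adjoint on this (finite-dimensional eigenspace-spanned, or at least spectrally decomposable) space with maximal eigenvalue $\mu_L$, the form $\Delta_L+2\lambda$ is semi-negative definite if and only if $\mu_L\le -2\lambda$, and it attains the value zero on a nonzero tensor exactly when $\mu_L=-2\lambda$.

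Combining the three cases: $g$ is linearly stable iff $\delta^2\nu_g\le 0$ on the conformal and on the transversal traceless factors simultaneously, i.e.\ iff $\mu_{\text{fns}}\le -2\lambda$ and $\mu_L\le -2\lambda$. For the neutral-stability statement, recall $g$ is neutrally linearly stable if it is linearly stable and $\delta^2\nu_g(h,h)=0$ for some nonzero $h\notin\mathrm{Im}\,\dive^*$; such an $h$ (after projecting onto the two remaining factors, using orthogonality) gives a vanishing contribution from the conformal part or the transversal traceless part, which by the analyses above forces $\mu_{\text{fns}}=-2\lambda$ (equality in part~(2)'s lower bound, noting the strict inequality $-2\lambda<\mu_{\text{fns}}$ fails so the boundary case must be checked to still yield $\delta^2\nu_g=0$ rather than positivity) or $\mu_L=-2\lambda$. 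Conversely either equality produces an explicit nonzero null vector outside $\mathrm{Im}\,\dive^*$.

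The only delicate point I anticipate is the boundary behavior in the conformal factor: part~(2) of Theorem~\ref{thm:CHI2ndvariation} is phrased as a strict two-sided inequality characterizing \emph{positivity}, so I must verify that at $\mu_{\text{fns}}=-2\lambda$ the form is not merely non-positive but actually \emph{vanishes} on the relevant conformal direction (so that this case genuinely contributes to neutral stability rather than to strict stability). This requires looking back at the precise expression for $N$ restricted to $\mathscr{C}M\cdot g$ in Section~3 (or in \cite{CaoHamiltonIlmanen}) and checking the eigenvalue computation at the endpoint; everything else is a routine bookkeeping of signs across the orthogonal decomposition.
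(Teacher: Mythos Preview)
Your proposal is correct and matches the paper's approach: the Corollary is stated immediately after Theorem~\ref{thm:CHI2ndvariation} without a separate proof, as it follows directly from analyzing $\delta^2\nu_g$ on each summand of the orthogonal decomposition, exactly as you outline. The one delicate point you flag---that at $\mu_{\text{fns}}=-2\lambda$ the conformal second variation genuinely vanishes rather than being strictly negative---is resolved by the eigenfunction expansion in the proof of Lemma~\ref{lem:stabilityconformal}, where the factor $(k_i+2\lambda)$ in $\sum_{i\geq 1}(k_i+\lambda)((n-1)k_i+n\lambda)(k_i+2\lambda)a_i^2$ shows that taking $f$ to be a first eigenfunction with $k_1=-2\lambda$ yields $\delta^2\nu_g(fg,fg)=0$.
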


It is quite interesting to observe that, when restricted to transversal traceless tensors, the second variations of $\nu$-entropy and the Hilbert action are exactly the same. It is also quite remarkable that the bound $-2\lambda$ of $\mu_{\text{fns}}$ appears in another notion of stability, the identity map regarded as a harmonic map, see \cite{Smith}. The stability of Einstein metric with respect to the Hilbert action has been studied by many people, and the case when the Einstein manifolds are simply-connected symmetric spaces of compact type was classified by N. Koiso in \cite{KoisocompactSS} 
with a few exceptions.  

One of our main results in this paper is a full classification of stability of Perelman's $\nu$-entropy on symmetric spaces of compact type. Since the stability of $\nu$-entropy is also related to the first eigenvalue $\mu_{\text{fns}}$ by Theorem 1.1, we are able to deal with the exceptional examples in Koiso's classification. Below we list the classification in low dimensions, say $\dim M \leq 10$, and leave out the standard spheres which are known to be linearly stable. The full classification is given by Theorem \ref{thm:classificationcompactSS} in section 4(see also Tables \ref{tab:eigenvaluesgroups} $\&$  \ref{tab:eigenvaluesGK} in Appendix A).  

\begin{thm}\label{thm:lowdim}
Let $(M^n,g)$,  $n\le 10$, be a simply-connected irreducible symmetric spaces of compact type other than the standard sphere. Then the linear stability with respect to Perelman's $\nu$-entropy is given by the following table, where ``n. l. stable" stands for ``neutrally linearly stable". 

\begin{center}
\begin{tabular}{|p{4cm}| c | p{2cm} || p{3.5cm} |c| p{2.5cm} |}
\hline
$\quad \quad \quad M$ & $n$ & $\quad$stability & $\quad \quad \quad M$ & $n$ & $\quad$ stability \\
\hline 
\hline
$\Cpx \mathrm{P}^m$\fts{($m=2,3,4, 5$)} & $2m$ & \small{n. l. stable} & $\frac{\sfSp(2)}{\sfU(2)} = \frac{\sfSO(5)}{\sfSO(3)\times \sfSO(2)}$ & $6$ & \small{linearly unstable} \\
\hline
$\frac{\sfU(4)}{\sfU(2)\times \sfU(2)} = \frac{\sfSO(6)}{\sfSO(4)\times \sfSO(2)}$ & $8$ & \small{n. l. stable} & $\sfSp(3)/\sfSp(1)\times \sfSp(2)$ & $8$ & \small{linearly unstable} \\
\hline
$\sfSU(3)/\sfSO(3)$ & $5$ & \small{n. l. stable} & $\sfSU(3)$ & $8$ & \small{linearly unstable} \\
\hline
$\frac{\sfSU(4)}{\sfSO(4)} = \frac{\sfSO(6)}{\sfSO(3)\times \sfSO(3)}$ & $9$ & \small{n. l. stable} & $\Spin(5)$ & $10$ & \small{linearly unstable} \\
\hline
$\sfSO(7)/\sfSO(5)\times \sfSO(2)$ & $10$ & \small{n. l. stable} & $\sfG_2/\sfSO(4)$ & $8$ & \small{linearly stable} \\
\hline
\end{tabular}
\end{center} 
\end{thm}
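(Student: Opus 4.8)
The plan is to obtain Theorem~\ref{thm:lowdim} from the general classification, Theorem~\ref{thm:classificationcompactSS}, by restricting to $\dim M\le 10$; the substance is, for each of the ten spaces, to compute the two spectral invariants that by the Corollary to Theorem~\ref{thm:CHI2ndvariation} govern the problem: the first nonzero eigenvalue $\mu_{\text{fns}}$ of the Laplacian on functions and the top eigenvalue $\mu_L$ of the Lichnerowicz Laplacian on transversal traceless symmetric $2$-tensors. Normalizing $g$ so that $\Ric=\lambda g$ with $\lambda>0$, the space is linearly stable exactly when $\mu_{\text{fns}}\le -2\lambda$ and $\mu_L\le -2\lambda$; it is neutrally linearly stable precisely when it is stable and one of the two invariants equals $-2\lambda$; and it is linearly unstable as soon as one of them exceeds $-2\lambda$.

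First I would set up the harmonic analysis on $M=G/K$, with $\mathfrak g=\mathfrak k\oplus\mathfrak p$. By Peter--Weyl and Frobenius reciprocity, $\Delta$ on $C^\infty(M)$ has eigenvalues $-c(\gamma)$ as $\gamma$ runs over the irreducible $G$-representations $V_\gamma$ with $V_\gamma^K\neq 0$, where $c(\gamma)$ is the Casimir eigenvalue in the normalization induced by $g$ (for $G/K$ with the metric from the negative Killing form one has $\lambda=\tfrac12$, and for a compact simple group with its bi-invariant metric $\lambda=\tfrac14$, in which normalization the Casimir of the adjoint equals $1$); hence $\mu_{\text{fns}}=-\min c(\gamma)$ over nontrivial spherical $\gamma$. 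For $\Delta_L$ I would use that transversal traceless tensors are the divergence-free sections of $G\times_K S^2_0\mathfrak p$: decompose $S^2_0\mathfrak p$ into $K$-irreducibles, list the $G$-representations admitting a nonzero $K$-map into $S^2_0\mathfrak p$, discard the summands lying in $\mathrm{Im}\,\dive^*$ (recognizable from the $G$-representations occurring in the $1$-form bundle), and evaluate $\Delta_L$ on each remaining $G$-isotypic block by the standard symmetric-space formula, going back to Koiso, that reduces its eigenvalues to Casimir data of $G$ and $K$ together with the parallel curvature term; then $\mu_L$ is the maximum of a short explicit list. Assembling these computations --- collected space-by-space in Appendix~A, Tables~\ref{tab:eigenvaluesgroups} and \ref{tab:eigenvaluesGK} --- one finds: for the Hermitian symmetric spaces $\Cpx\mathrm P^m$, $\sfU(4)/\sfU(2)\times\sfU(2)$ and $\sfSO(7)/\sfSO(5)\times\sfSO(2)$, $\mu_{\text{fns}}=-2\lambda$ (the K\"ahler--Einstein equality, forced by the holomorphic vector fields on these homogeneous spaces) together with $\mu_L\le -2\lambda$, hence neutral stability; for the real forms $\sfSU(3)/\sfSO(3)$ and $\sfSU(4)/\sfSO(4)$, $\mu_{\text{fns}}<-2\lambda$ but there is a TT eigentensor realizing $\mu_L=-2\lambda$, again neutral stability; for $\sfG_2/\sfSO(4)$ both invariants are strictly below $-2\lambda$, so it is linearly stable with negative definite second variation; and $\sfSp(2)/\sfU(2)$, $\sfSp(3)/\sfSp(1)\times\sfSp(2)$, $\sfSU(3)$, $\Spin(5)$ each carry a direction with eigenvalue above $-2\lambda$ --- for the two group manifolds already through $\mu_{\text{fns}}$ (for instance $\mu_{\text{fns}}=-\tfrac{16}{9}\lambda$ for $\sfSU(3)$, from the fundamental representation), for the other two through a destabilizing TT tensor --- so they are linearly unstable.

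The main obstacle, as in Koiso's work, is pinning down $\mu_L$ when the candidate top eigenvalue sits exactly at or just below $-2\lambda$: distinguishing $\mu_L=-2\lambda$ from $\mu_L<-2\lambda$ (relevant for $\sfSU(3)/\sfSO(3)$, $\sfSU(4)/\sfSO(4)$ and $\sfG_2/\sfSO(4)$) requires carrying the $K$-decomposition of $S^2_0\mathfrak p$ and the Casimir bookkeeping all the way through and carefully separating the divergence-free blocks from those in $\mathrm{Im}\,\dive^*$. A second point, which is precisely what lets the classification go beyond Koiso's few undecided cases, is the observation that whenever the Hilbert-action TT question was left open one has $\mu_{\text{fns}}>-2\lambda$, so the $\nu$-entropy is unstable for purely conformal reasons and the delicate TT computation becomes irrelevant; the group manifolds $\sfSU(3)$ and $\Spin(5)$ in the table are the low-dimensional instances. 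Finally, every ``neutrally stable'' verdict is a borderline equality that must be certified from both sides: one exhibits the extremal eigenfunction or eigentensor, and one verifies through the finite Casimir search of Appendix~A that nothing exceeds it.
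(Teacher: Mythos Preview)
Your overall plan is exactly the paper's: specialize Theorem~\ref{thm:classificationcompactSS} to $\dim M\le 10$ by reading off $\mu_{\text{fns}}$ and $\mu_L$ from the tables in Appendix~A, which in turn rest on Koiso's Casimir computations and Urakawa's first-eigenvalue tables. The paper does not redo the harmonic analysis you sketch; it simply cites \cite{KoisocompactSS}, \cite{Urakawa}, and \cite{GasquiGoldschmidt}, so your Peter--Weyl/Frobenius description is more than is needed, but not wrong.

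There is, however, a concrete misattribution in your case-by-case summary. For $\sfSp(3)/\sfSp(1)\times\sfSp(2)=\Qua\mathrm{P}^2$ you say the instability comes from a destabilizing TT tensor. In fact $-\lambda^{-1}\mu_{\text{fns}}=\tfrac32<2$ there, so the instability is \emph{conformal}; the TT question for this space is precisely one of Koiso's undecided cases (marked ``unknown'' in Table~\ref{tab:eigenvaluesGK}). Relatedly, you name $\sfSU(3)$ and $\Spin(5)$ as the low-dimensional instances of ``Hilbert-action TT question left open, but $\mu_{\text{fns}}>-2\lambda$ decides it.'' That is backwards: for $\sfSU(3)$ Koiso showed $\mu_L=-2\lambda$ (it is infinitesimally deformable), and for $\Spin(5)$ he showed $\mu_L>-2\lambda$ (H.unstable); neither is undecided. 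The genuine low-dimensional instance of your stated observation is $\sfSp(3)/\sfSp(1)\times\sfSp(2)$ itself. So swap the roles: $\sfSp(2)/\sfU(2)$ is unstable via the TT tensor found by Gasqui--Goldschmidt, while $\sfSp(3)/\sfSp(1)\times\sfSp(2)$ is unstable via $\mu_{\text{fns}}$ --- and this is exactly the mechanism that lets the paper resolve what Koiso could not.
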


\begin{rem}
The examples of complex projective spaces $\Cpx \mathrm{P}^m$ with any $m\geq 2$, complex hyperquadrics $Q^4 = \sfSO(6)/\sfSO(4)\times \sfSO(2)$ and $Q^3 = \sfSO(5)/\sfSO(3)\times \sfSO(2)$ have been discussed in \cite{CaoHamiltonIlmanen}. The second variation of $\nu$-entropy on $\sfG_2/\sfSO(4)$ is negative definite which gives the first such example other than the standard spheres.
\end{rem}

\begin{rem}
The first eigenvalue of Laplacian on functions and Lichnerowicz Laplacian on transversal traceless are also important for the stability of black holes and event horizons in physics.  For example, in the Freund-Rubin compactification, the stability condition of an Einstein manifold $M^n$ with Einstein constant $\lambda > 0$ is given by
\[
\mu_L \geq - \frac{\lambda}{n-1}\left(4 - \frac{1}{4}(n-5)^2\right).
\]
From the estimates of $\mu_\text{fns}$ and $\mu_L$ in \cite{GibbonsHartnoll}, \cite{GibbonsHartnollPope}, \cite{GubserMitra}, \cite{PagePope1} and \cite{PagePope2}, the following examples are linearly unstable: 
\begin{enumerate}
\item The three infinite families of homogeneous Einstein metrics in dimensions $5$ and $7$ in \cite{Romans}, \cite{CastellaniDAuriaFre}, \cite{DAuriaFreVan} and  \cite{PagePope1}.  They are $\sph^1$-bundles over $\sph^2 \times \sph^2$, $\Cpx \mathrm{P}^2 \times \sph^2$ and $\sph^2\times \sph^2 \times \sph^2$ respectively. These examples are special cases of the examples in \cite{WangZiller} by M. Wang and W. Ziller. 
\item A few of the inhomogeneous Einstein metrics on the products of spheres in low dimensions constructed by C. B\"{o}hm in \cite{Boehm}.
\end{enumerate}
\end{rem}


\smallskip

The paper is organized as follows. In section 2 we set up the convention of curvature tensors, Laplacians and other geometric quantities, and then collect a few useful facts of the second variation of the Hilbert action. In section 3 we recall the second variation formula of the $\nu$-entropy obtained by \cite{CaoHamiltonIlmanen} and then give a detailed proof of Theorem \ref{thm:CHI2ndvariation}. In section 4, we first quote Koiso's results on stability of symmetric spaces with respect to Hilbert action and then prove our classification result, Theorem \ref{thm:classificationcompactSS}. Finally, in Appendix A, we list the first eigenvalues of Laplacian on functions and Lichnerowicz Laplacian on symmetric $2$-tensors for symmetric spaces of compact type.

\medskip

\textbf{Acknowledgment}. The second named author would like to thank Wolfgang Ziller for helpful communications. 

\medskip{}

\section{Preliminaries}
In this section we first fix our conventions of Riemann curvature tensors, Laplacians, etc. In the second part we collect some useful facts about stability of Einstein metrics with respect to Hilbert action and, more details can be found in \cite[Chapter 4]{Besse} and the references therein.

Let $(M^n , g)$ be a Riemannian manifold. At any fixed point $x\in M$ let $\set{E_i}_{i=1}^n$ be a geodesic frame, i.e., $g(E_i, E_j) = \delta_{ij}$ and $\nabla_{E_i} E_j|_x = 0$. We denote by $\nabla_i = \nabla_{E_i}$ the covariant derivative. The Riemann curvature tensor is given by $R_{ijkl} = g(Rm(E_i, E_j)E_k, E_l)$ and on the round sphere we have $R_{ijji}\geq 0$. It follows that the Ricci curvature is given by 
\[
\Ric(E_i, E_j) = R_{ij} = \sum_{k} R_{ikkj}. 
\]
The covariant derivative commuting formula, for example for a covariant $2$-tensor $\beta_{kl}$, is given by
\[
\nabla_i \nabla_j \beta_{kl} - \nabla_j \nabla_i \beta_{kl} = - \sum_{p} R_{ijkp}\beta_{pl} -\sum_{p} R_{ijlp} \beta_{kp}.
\]

For any covariant  $2$-tensor $h \in C^{\infty}(T^*M \otimes T^*M)$ its divergence is given by 
\begin{equation*}
(\dive h)_i = - (\delta h)_{i} = \sum_{j=1}^n \nabla_j h_{ji}.
\end{equation*}
$h$ is called \emph{transversal} if $\dive h = 0$. Let $f\in C^{\infty}(M)$ be a smooth function. We denote the Laplacian
\[
\Delta f = \sum_{i=1}^{n}\nabla_i \nabla_i f.
\]
The rough Laplacian acting on tensors is also denoted by $\Delta$. For example,  on covariant $2$-tensors it is given by
\begin{equation*}
\left(\Delta h\right)_{ij} = \sum_{k =1}^n \nabla_k \nabla_k h_{ij}.
\end{equation*}
Note that $C^\infty(S^2(T^* M))$ denotes all covariant symmetric $2$-tensors on $M$. The linear map
\[
Rm(h, \cdot)_{ij} = \sum_{k,l=1}^{n}R_{iklj}h_{kl}
\] 
maps $C^\infty(S^2(T^*M))$ to itself. The Lichnerowicz Laplacian on $C^\infty(S^2(T^*M))$ is defined by 
\[
(\Delta_L h)_{ij} = (\Delta h)_{ij} + 2 \sum_{k,l=1}^{n}R_{iklj} h_{kl} - \sum_{k=1}^{n} \left(R_{ik}h_{kj} + h_{ik}R_{kj}\right).
\]
If $(M,g)$ is an Einstein manifold with $\Ric = \lambda g$, then we have
\[
\Delta_L h = \Delta h + 2Rm(h, \cdot) - 2\lambda h. 
\]
Note that our Laplacian and Lichnerowicz Laplacian are different from those in \cite{Besse} and \cite{KoisocompactSS} by a minus sign. If $M$ is compact, then our Laplacians are semi-negative definite. 

Next we collect a few facts of stability of Einstein metric with respect to Hilbert action. Recall that the Hilbert action of the Riemannian metric $g$ with volume $1$ is the following functional
\[
\Hil (g) = \int_M R_g dV,
\]
where $R_g$ is the scalar curvature of $(M, g)$. The critical points of this action are precisely Einstein metrics. Let $\mathcal{M}_1$ be the set of all Riemannian metrics with volume $1$ on $M$. Suppose that $g(t) \in \mathcal{M}_1$ is a one-parameter family with $g(0) = g$ an Einstein metric, then the first variation formula of volume implies that
\[
\int_M\left( \Tr_g h\right) dV  = 0, 
\] 
where 
\[
h = \frac{d}{dt}\Big{|}_{t=0} g(t)
\]
is a symmetric $2$-tensor. The second variation formula of $\Hil$ at $g$ along the direction $h$ is given in Proposition 4.55 in \cite{Besse}. Using the decomposition (\ref{eqn:S2Mdecomposition}) of symmetric $2$-tensors on Einstein manifolds we have

\begin{thm}[Theorem 4.60 in \cite{Besse}]
Let $(M^n, g)$ be a compact Einstein manifold other than the standard sphere. Then the decomposition 
\[
T_g \mathcal{M}_1 = \mathrm{Im} \dive^* \oplus \mathscr{C}_g M \cdot g \oplus \left(\ker \dive \cap \ker \Tr\right)
\]
is orthogonal with respect to the Hessian $\Hil''_g$, where 
\[
\mathscr{C}_gM = \set{f\in C^\infty (M) : \int_M f dV_g = 0}
\]
and 
\[
\ker \dive \cap \ker \Tr = \set{h \in C^{\infty}(S^2 (T^*M)) : \dive h = 0 \, \, \text{and}\,\, \Tr h = 0}.
\]
Furthermore we have
\begin{enumerate}
\item the first factor is contained in the null-space of $\Hil''_g$;
\item the second variation on the second factor is given by 
\begin{equation*}
\Hil ''_g(f g, f g) = - \frac{n-2}{2}\int_M f \left[(n-1)\Delta_g f + R_g f\right] dV \quad \text{for any}\quad  f\in \mathscr{C}_g(M);  
\end{equation*}
\item for $h \in \ker \dive\cap \ker \Tr$ we have
\begin{equation*}
\Hil''_g(h, h) = \half \int_M \left(h \Delta h + 2 Rm(h,h)\right) dV = \half \int_M \left(h \Delta_L h + 2 \lambda \abs{h}^2\right) dV
\end{equation*}
where $\lambda$ is the Einstein constant of $(M, g)$.
\end{enumerate}
In particular the restriction of $\Hil''_g$ to the second factor is positive definite; the nullity and the coindex of the restriction of $\Hil''_g$ to the third factor are finite. 
\end{thm}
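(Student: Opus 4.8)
We sketch a proof deriving everything from the general second variation formula of the total scalar curvature functional at an Einstein metric, Proposition 4.55 in \cite{Besse}, by restricting it successively to the three summands of the decomposition in the statement. Throughout we normalize $\vol(M,g)=1$ and use $\scal_g = n\lambda$ and $T_g\mathcal{M}_1 = \set{h : \int_M\Tr_g h\,dV = 0}$; note that a pure-trace tensor $fg$ lies in $T_g\mathcal{M}_1$ precisely when $f\in\mathscr{C}_gM$, while every $h\in\ker\dive\cap\ker\Tr$ and every element of $\mathrm{Im}\,\dive^*$ lies in $T_g\mathcal{M}_1$ automatically. Up to a fixed positive constant, the $L^2$-gradient of $\Hil$ on $\mathcal{M}_1$ is the negative traceless Ricci tensor, so $g$ is critical and the self-adjoint operator representing $\Hil''_g$ on $T_g\mathcal{M}_1$ is (that constant times) the linearized traceless Ricci operator $D_g\Ric^0$ followed by $L^2$-projection onto $T_g\mathcal{M}_1$.

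First I would prove the orthogonality of the decomposition together with part (1). Since $\Hil$ is $\mathrm{Diff}(M)$-invariant, the Euler--Lagrange operator $g\mapsto -\Ric^0(g)$ is natural, i.e. $\mathrm{Diff}(M)$-equivariant; linearizing the identity $\Ric^0(\varphi_t^*g)=\varphi_t^*(\Ric^0(g))$ along a flow $\varphi_t$ of a vector field $X$ gives $D_g\Ric^0(L_X g) = L_X(\Ric^0(g)) = 0$ since $\Ric^0(g)=0$. As $\set{L_X g}$ spans $\mathrm{Im}\,\dive^*$, this yields both part (1) and the $\Hil''_g$-orthogonality of $\mathrm{Im}\,\dive^*$ to the other two factors. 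The remaining orthogonality, between $\mathscr{C}_gM\cdot g$ and $\ker\dive\cap\ker\Tr$, then follows because these subspaces are already $L^2$-orthogonal and, modulo $\mathrm{Im}\,\dive^*$, the second-variation operator preserves the splitting: on an Einstein manifold $\Delta_L$ commutes with $\Tr$ and maps $\ker\dive\cap\ker\Tr$ into itself, while the auxiliary terms of the linearized Ricci operator are either Hessians of functions (contributing only within the pure-trace part after projection) or lie in $\mathrm{Im}\,\dive^*$.

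For the conformal factor I would compute $\Hil''_g(fg,fg)$, $f\in\mathscr{C}_gM$, directly from the conformal transformation law of scalar curvature: for $\bar g = u^{4/(n-2)}g$ one has $\Hil(\bar g) = \int_M\bigl(-\tfrac{4(n-1)}{n-2}\,u\Delta u + \scal_g u^2\bigr)\,dV_g$, together with $\Hil(cg) = c^{(n-2)/2}\Hil(g)$; expanding along the genuine unit-volume path obtained by rescaling $u_s^{4/(n-2)}g$ with $u_s = 1 + \tfrac{n-2}{4}sf$ — where the volume renormalization contributes exactly the $\scal_g f^2$ term — yields assertion (2). For the transversal traceless factor I would substitute $h\in\ker\dive\cap\ker\Tr$ into Besse's formula: the divergence and Hessian-of-trace terms drop out, and the first variation of scalar curvature in such a direction vanishes (it consists of the term $-\langle\Ric,h\rangle = -\lambda\Tr h$ together with terms built only from $\Tr h$ and $\dive h$), so that what remains is $\Hil''_g(h,h) = \half\int_M\bigl(h\Delta h + 2\,Rm(h,h)\bigr)\,dV$; substituting the Einstein identity $\Delta h + 2Rm(h,\cdot) = \Delta_L h + 2\lambda h$ then produces the second form in (3).

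Finally, positive-definiteness of $\Hil''_g$ on $\mathscr{C}_gM\cdot g$ is the Lichnerowicz--Obata estimate: since $(M,g)$ is not the standard sphere, the first nonzero eigenvalue of $-\Delta$ on functions strictly exceeds $\tfrac{n}{n-1}\lambda = \tfrac{\scal_g}{n-1}$, so for $f\in\mathscr{C}_gM\setminus\set{0}$ one has $-(n-1)\int_M f\Delta f\,dV > \scal_g\int_M f^2\,dV$, whence $\Hil''_g(fg,fg) = -\tfrac{n-2}{2}\int_M f\bigl[(n-1)\Delta f + \scal_g f\bigr]\,dV > 0$ for $n\ge 3$. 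For the third factor, $\Delta_L$ restricted to the closed subspace $\ker\dive\cap\ker\Tr$ over the compact manifold $M$ is, up to sign, self-adjoint and elliptic with discrete spectrum unbounded below, so $\Delta_L + 2\lambda$ has only finitely many nonnegative eigenvalues, each of finite multiplicity; hence the nullity and the coindex of $\Hil''_g$ on this factor are finite. The step I expect to be the main obstacle is the conformal computation in (2): one must keep the path inside $\mathcal{M}_1$ and correctly track several normalizing constants (the exponent $\tfrac{4}{n-2}$, the volume correction, and $\scal_g = n\lambda$), since dropping the volume-normalization term produces a wrong coefficient in front of $\scal_g f^2$.
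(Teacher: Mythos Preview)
The paper does not contain a proof of this statement: it is quoted verbatim as Theorem 4.60 from \cite{Besse} in Section~2 as background material, with no argument given. The paper's own computations (Lemma~\ref{lem:Nimdiv}, Lemma~\ref{lem:stabilityconformal}, and the proof of Theorem~\ref{thm:CHI2ndvariation}) establish the analogous decomposition and block structure for the \emph{$\nu$-entropy} second variation, not for the Hilbert action, and those arguments are rather different in detail (e.g.\ Lemma~\ref{lem:Nimdiv} is a direct tensor calculation that $Nh=0$ on $\mathrm{Im}\,\dive^*$, not an appeal to naturality of the Euler--Lagrange operator). So there is no ``paper's own proof'' to compare your proposal against.

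That said, your sketch is a sound outline of how one derives the result from Besse's Proposition~4.55. The diffeomorphism-invariance argument for part~(1) and the orthogonality of $\mathrm{Im}\,\dive^*$ is standard and correct; the orthogonality of the conformal and TT factors via the fact that $\Delta_L$ preserves trace and divergence on Einstein manifolds is the right mechanism; the Lichnerowicz--Obata step for positive definiteness on the conformal piece is exactly the point; and the ellipticity argument for finite nullity and coindex is the standard one. Your caution about the conformal computation in~(2) is well placed: the cleanest route is usually not the $u^{4/(n-2)}$ substitution but rather to take $g_t = (1+tf)g$ rescaled to unit volume and differentiate the known formula $R_{(1+tf)g} = (1+tf)^{-1}R_g - (n-1)(1+tf)^{-2}\Delta_g(1+tf) + \text{lower order}$ directly, which avoids juggling the fractional exponent.
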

Note that the coindex is the number of the positive eigenvalues of $\Hil''_g$.  

\begin{defn}[Definition 2.7 in \cite{KoisocompactSS}]
An Einstein metric $g$ on $M$ with $\Ric = \lambda g$ is \emph{stable} if $\Hil''_g$ is negative definite on $\ker \dive\cap \ker\Tr$, i.e., for any transversal traceless symmetric $2$-tensor $h$ we have
\[
\int_M \left(h \Delta h + 2Rm(h,h)\right)dV = \int_M \left(h \Delta_L h + 2\lambda \abs{h}^2\right)dV < 0.
\]
It is called \emph{unstable} if $\Hil''_g$ has positive eigenvalue on $\ker\dive \cap \ker \Tr$.
\end{defn}
\begin{rem}
Note that in \cite[Definition 4.63]{Besse}, $g$ is called stable if $\Hil ''_g$ is uniformly negative definite, i.e., there exists $\kappa > 0$ such that  
\[
\int_M \left(h \Delta h + 2Rm(h,h)\right)dV = \int_M \left(h \Delta_L h + 2\lambda \abs{h}^2\right)dV \leq - \kappa \int_M \abs{h}^2 dV
\]
for any nonzero $h \in \ker \dive \cap \ker \Tr$. Since the space $\ker \dive\cap \ker \Tr$ is infinite dimensional, it is stronger than the definition above of \cite{KoisocompactSS}. 
\end{rem}

\medskip{}

\section{The second variation of Perelman's $\nu$-entropy on Einstein metrics}

In this section we first recall the second variation formula of Perelman's $\nu$-entropy on Einstein manifolds by Hamilton, Ilmanen and the first author. Then from the decomposition (\ref{eqn:S2Mdecomposition}) of symmetric $2$-tensors on Einstein manifolds, we analyze the second variation on each factor and prove Theorem \ref{thm:CHI2ndvariation}.

For a $1$-from $\omega$ we denote by
\[
(\dive^* \omega)_{ij} = - \half\left(\nabla_i \omega_j + \nabla_j \omega_i \right) = - \half \left(\mathscr{L}_{\omega^\sharp} g\right)_{ij},
\]
where $\omega^\sharp$ is the dual vector field of $\omega$ given by 
\[
g(\omega^\sharp, X) = \omega(X) \quad \text{for any}\quad X \in TM.
\]
The second variation of $\nu$-entropy at a positive Einstein metric is given by the following 
\begin{thm}[Cao-Hamilton-Ilmanen \cite{CaoHamiltonIlmanen}]
Let $(M^n, g)$ be an Einstein manifold with Einstein constant $\frac{1}{2\tau}> 0$ and consider variations $g(s) = g + s h$. Then the second variation $\delta_g^2 \nu (h, h)$ is given by
\[
\frac{d^2}{ds^2}\Big{|}_{s=0} \nu(g(s)) = \frac{\tau}{\vol(M, g)}\int_M \langle N h, h\rangle dV,
\]
where
\[
N h = \half \Delta h + Rm(h,\cdot) + \dive^* \dive h + \half \Hess v_h - \frac{g}{2n \tau \vol(M, g)} \int_M \Tr_g h dV, 
\]
and $v_h$ is the unique solution of 
\[
\Delta v_h + \frac{v_h}{2\tau} = \dive \dive h.
\]
\end{thm}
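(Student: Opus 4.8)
The plan is to differentiate Perelman's first variation formula for $\nu$ one more time, using crucially that an Einstein metric is a critical point so that the minimizing data degenerates. First I would record the structure of the minimizer. On an Einstein manifold with $\Ric=\frac{1}{2\tau}g$ and $\frac{1}{2\tau}>0$, the pair $(f,\tau)$ realizing $\nu(g)$ has $f$ \emph{constant} --- fixed by $(4\pi\tau)^{-n/2}\int_M e^{-f}dV=1$, so that the weighted measure $(4\pi\tau)^{-n/2}e^{-f}dV$ is exactly $dV/\vol(M,g)$ --- and $\tau$ the value pinned by the Einstein constant. Moreover the minimizing data $(f_g,\tau_g)$ depends smoothly on $g$ near such a metric: the linearization of the Euler--Lagrange system in $(f,\tau)$ is an isomorphism, the relevant operator on functions being $2\tau\Delta+1=2\tau(\Delta+\frac{1}{2\tau})$, which is invertible because by Lichnerowicz--Obata \cite{Lichnerowicz,Obata} the first nonzero eigenvalue of $-\Delta$ on functions is at least $\frac{n}{n-1}\cdot\frac{1}{2\tau}>\frac{1}{2\tau}$. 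Hence for $g(s)=g+sh$ the minimizing data forms a smooth curve $(f(s),\tau(s))$ with $f(0)$ constant; write $\dot f=\frac{d}{ds}\big|_{0}f(s)$ and $\dot\tau=\frac{d}{ds}\big|_{0}\tau(s)$.

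Next I would invoke Perelman's first variation formula, which at a metric where $(f,\tau)$ minimizes reads $\delta\nu_g(h)=-\tau\int_M\langle\mathcal{S},h\rangle\,(4\pi\tau)^{-n/2}e^{-f}dV$, where $\mathcal{S}:=\Ric+\Hess f-\frac{1}{2\tau}g$ is the soliton tensor and the $f$- and $\tau$-derivative contributions have dropped out by the Euler--Lagrange equations for the minimizer. Since $g$ is Einstein, $\mathcal{S}\equiv 0$ at $s=0$; therefore, differentiating the curve $s\mapsto-\tau(s)\int_M\langle\mathcal{S}(g(s),f(s),\tau(s)),h\rangle_{g(s)}(4\pi\tau(s))^{-n/2}e^{-f(s)}dV_{g(s)}$ at $s=0$, every term in which $\mathcal{S}$ is not differentiated is annihilated by $\mathcal{S}|_{s=0}=0$, and we are left with
\[
\delta^2\nu_g(h,h)=-\frac{\tau}{\vol(M,g)}\int_M\langle D\mathcal{S}(h),h\rangle\,dV.
\]
So the problem reduces to computing $D\mathcal{S}(h)=D\Ric(h)+\Hess(\dot f)+\frac{\dot\tau}{2\tau^2}g-\frac{1}{2\tau}h$, where I use that $D(\Hess_g f)(h)$ at the constant base potential contributes only $\Hess(\dot f)$ (the $g$-derivative of $\Hess_g(\mathrm{const})$ vanishes identically). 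Inserting the standard linearization of the Ricci tensor, which in the sign conventions of section 2 is $D\Ric(h)=-\half\Delta_L h-\dive^*\dive h-\half\Hess(\Tr h)$, and using $\Delta_L h=\Delta h+2Rm(h,\cdot)-\frac{1}{\tau}h$, the $\pm\frac{1}{2\tau}h$ terms cancel and
\[
-D\mathcal{S}(h)=\half\Delta h+Rm(h,\cdot)+\dive^*\dive h+\half\Hess(\Tr h)-\Hess(\dot f)-\frac{\dot\tau}{2\tau^2}g.
\]

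It then remains to identify $\dot f$ and $\dot\tau$ from the linearized Euler--Lagrange system. Linearizing the potential equation $\tau(2\Delta f-\abs{\nabla f}^2+R)+f-n=\nu(g)$ about the constant solution --- using $\delta(\abs{\nabla f}^2)=0$, the linearization $\delta R(h)=-\Delta\Tr h+\dive\dive h-\frac{1}{2\tau}\Tr h$, and the criticality $\frac{d}{ds}\big|_{0}\nu(g(s))=0$ --- gives $2\tau\Delta\dot f+\dot f=\tau\Delta\Tr h-\tau\dive\dive h+\half\Tr h-\frac{n\dot\tau}{2\tau}$, while linearizing the equation obtained by varying $\tau$ gives $\dot\tau=\frac{\tau}{n\,\vol(M,g)}\int_M\Tr h\,dV$ (consistent with the scaling $h=g$, which forces $\dot\tau=\tau$). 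A short computation then shows that $v_h:=\Tr h-2\dot f-\frac{1}{\vol(M,g)}\int_M\Tr h\,dV$ solves $\Delta v_h+\frac{v_h}{2\tau}=\dive\dive h$, and is its unique solution since $\Delta+\frac{1}{2\tau}$ is invertible; hence $\half\Hess(\Tr h)-\Hess(\dot f)=\half\Hess v_h$ (the constant drops, having zero Hessian) and $-\frac{\dot\tau}{2\tau^2}g=-\frac{g}{2n\tau\,\vol(M,g)}\int_M\Tr h\,dV$. Substituting both identities into the expression for $-D\mathcal{S}(h)$ recovers exactly $Nh$, and $\delta^2\nu_g(h,h)=\frac{\tau}{\vol(M,g)}\int_M\langle Nh,h\rangle\,dV$ follows.

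The step I expect to be the main obstacle is the bookkeeping around the minimizing data $(f,\tau)$ and the unit-mass constraint: deriving the linearized $\tau$-equation correctly so that the last term of $N$ emerges with precisely the constant $-\frac{1}{2n\tau\vol}$, and arranging the trace, divergence and Hessian terms so they assemble without error into $N$ --- in particular, seeing that the awkward constant $\frac{1}{\vol}\int_M\Tr h\,dV$ is harmless because it contributes nothing to $\Hess v_h$. A secondary technical point, needed to legitimize differentiating under the infimum twice, is the smooth dependence of $(f_g,\tau_g)$ on $g$, which again rests on the Lichnerowicz--Obata spectral gap $\frac{1}{2\tau}<\mu_1(-\Delta)$.
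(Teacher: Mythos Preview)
The paper does not actually prove this theorem. It is stated with attribution to \cite{CaoHamiltonIlmanen}, and the remark immediately following it says that ``the details of the calculation of the second variation formula at a general shrinking Ricci soliton are given by the first author and M. Zhu in \cite{CaoZhu} which includes the above formula as a special case.'' So there is no in-paper proof to compare your proposal against.

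That said, your outline is correct and is essentially the standard approach (and, as far as one can tell from the literature, the approach of \cite{CaoZhu} specialized to the Einstein case). The logic --- smooth dependence of the minimizing pair $(f,\tau)$ via invertibility of $\Delta+\frac{1}{2\tau}$ on functions, differentiating Perelman's first variation and using $\mathcal{S}=0$ at the Einstein point to kill every term except $D\mathcal{S}$, inserting the linearized Ricci formula, and then identifying $\dot f$ and $\dot\tau$ from the linearized Euler--Lagrange equations --- is exactly right. Your algebra checks: from the linearized potential equation one gets
\[
2\Delta\dot f+\tfrac{\dot f}{\tau}=\Delta\Tr h-\dive\dive h+\tfrac{1}{2\tau}\Tr h-\tfrac{n\dot\tau}{2\tau^2},
\]
and combining this with $\dot\tau=\frac{\tau}{n\,\vol}\int_M\Tr h\,dV$ shows that $v_h:=\Tr h-2\dot f-\frac{1}{\vol}\int_M\Tr h\,dV$ satisfies $\Delta v_h+\frac{v_h}{2\tau}=\dive\dive h$, with the additive constant harmless in $\Hess v_h$. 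The only point worth tightening in a full write-up is the justification of the smooth curve $(f(s),\tau(s))$: the implicit function theorem must be applied to the coupled system in $(f,\tau)$ together with the unit-mass constraint, not just to the $f$-equation alone, but this is routine once the spectral gap $\frac{1}{2\tau}<\mu_1(-\Delta)$ is in hand.
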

\begin{rem}
(1) The details of the calculation of the second variation formula at a general shrinking Ricci soliton are given by the first author and M. Zhu in \cite{CaoZhu} which includes the above formula as a special case.  \\
(2) Note that the uniqueness of $v_h$ follows from the Lichnerowicz bound of the first eigenvalue of Laplacian. Integrating both sides yields
\[
\int_M v_h dV= 0
\]
by the divergence theorem on closed manifold. 
\end{rem}

\begin{defn}
An Einstein metric $(M^n, g)$ is called \emph{linearly stable} with respect to $\nu$-entropy if the second variation $\delta^2_g\nu (h,h) \leq 0$ for any $h \in C^\infty(S^2(T^* M))$. Otherwise it is called \emph{linearly unstable}. A linearly stable Einstein metric is called \emph{neutrally linearly stable} if $\delta^2_g \nu(h, h) = 0$ for some nonzero $h \in C^\infty(S^2(T^*M))$ not in $\mathrm{Im}\dive^*$.
\end{defn}

In the rest of the paper we assume that the Einstein metric $(M, g)$ has  $\Ric = \lambda g$. Since Perelman's $\nu$-entropy is diffeomorphism invariant  we have $\delta^2 \nu_g(h, h) = 0$ for any $h \in \mathrm{Im}\dive^*$. We first show that the stability operator $N$ actually vanishes on $\mathrm{Im}\dive^*$. 
 
\begin{lem}\label{lem:Nimdiv}
Let $\xi\in TM$ be a vector field and $h = 2 \mathscr{L}_{\xi} g$. Then we have $N h = 0$.
\end{lem}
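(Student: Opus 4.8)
The plan is to substitute $h = 2\mathscr{L}_\xi g = -4\,\dive^*\xi^\flat$ directly into the formula
\[
Nh = \tfrac12 \Delta h + Rm(h,\cdot) + \dive^*\dive h + \tfrac12\Hess v_h - \frac{g}{2n\tau\vol(M,g)}\int_M \Tr_g h\, dV
\]
and check that each of the five pieces either vanishes or cancels against another. First I would deal with the cheap terms. Since $\Tr_g(\mathscr{L}_\xi g) = 2\dive\xi$ and $\int_M \dive\xi\, dV = 0$ on a closed manifold, the last term is zero. For the $v_h$ term, I would show $v_h = 0$: using the standard commutation identity $\dive\dive(\mathscr{L}_\xi g) = $ (a second-order expression in $\xi$ that, on an Einstein manifold, equals $\Delta(\dive\xi) + \text{(curvature)}\cdot\xi$), one checks that $\dive\dive h$ is exactly $\Delta(\Tr_g h/2)+\tfrac{1}{2\tau}(\Tr_g h/2)$ — more precisely that $w := \tfrac12\Tr_g h = \dive\xi$ solves $\Delta w + \tfrac{1}{2\tau}w = \dive\dive h$. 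Wait — I need to be careful here: the Einstein/Bochner identity gives $\dive\dive(\mathscr{L}_\xi g) = \Delta(\dive\xi) + \dive(\Ric(\xi,\cdot)) + \text{terms}$; on an Einstein manifold with $\Ric = \frac{1}{2\tau}g$ this collapses to $\Delta(\dive\xi)+\frac{1}{2\tau}\dive\xi$ plus the remaining Bochner term, and after bookkeeping one finds $v_h$ satisfies the same elliptic equation as $\dive\xi$ scaled appropriately; by the uniqueness clause in Theorem 3.1 (which rests on the Lichnerowicz eigenvalue bound, so $\Delta + \frac{1}{2\tau}$ is invertible on functions of mean zero), $v_h$ is determined, hence $\tfrac12\Hess v_h$ is a pure Hessian of an explicit function.

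The heart of the argument is then the combination $\tfrac12\Delta h + Rm(h,\cdot) + \dive^*\dive h + \tfrac12\Hess v_h$. The key observation is that all four remaining terms are built from the single vector field $\xi$, and the identity to prove is really an identity relating the Lie derivative $\mathscr{L}_\xi g$ to the Lie derivative of $g$ along a modified vector field. Concretely, I expect that $\tfrac12\Delta(\mathscr{L}_\xi g) + Rm(\mathscr{L}_\xi g,\cdot)$ reduces, via the Weitzenböck-type formula for $\Delta_L$ acting on $\mathscr{L}_\xi g$ together with $\Delta_L(\mathscr{L}_\xi g) = \mathscr{L}_{(\Delta\xi + \text{Ric}(\xi))}g$ (the standard fact that $\Delta_L$ intertwines with the vector Laplacian under $\dive^*$), to an expression of the form $\mathscr{L}_{\eta}g$ for an explicit $\eta$; and $\dive^*\dive(\mathscr{L}_\xi g) = -\tfrac12\mathscr{L}_{(\text{something})}g$ is again a Lie derivative term. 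Matching these Lie-derivative terms against $\tfrac12\Hess v_h = \tfrac14\mathscr{L}_{\nabla v_h}g$ should produce $Nh = \mathscr{L}_{Z}g$ for a vector field $Z$, and then one checks $Z = 0$ — or, more cleanly, one invokes the already-established fact (part (1) of Theorem 1.1, or rather the stated conclusion of this very lemma used as a target) that $N$ annihilates $\mathrm{Im}\dive^*$ simply because $\nu$ is diffeomorphism-invariant, so $\delta^2\nu_g(h,h)=0$, combined with self-adjointness of $N$ forcing $Nh=0$ pointwise. Actually the clean route is: self-adjointness of $N$ plus $\langle Nh, h\rangle$ integrating to zero for \emph{all} $h\in\mathrm{Im}\dive^*$ (by diffeomorphism invariance) shows $N$ vanishes on $\mathrm{Im}\dive^*$ provided one knows $N$ maps $\mathrm{Im}\dive^*$ into itself; so an alternative plan is to prove the mapping property $N(\mathrm{Im}\dive^*)\subseteq\mathrm{Im}\dive^*$ and then conclude by the quadratic-form argument.

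I would present the computational route as primary since it is more self-contained: establish $v_h = \dive\xi$ up to the normalization forced by the PDE (one must double-check the constant, as $h = 2\mathscr{L}_\xi g$ carries a factor of $2$), kill the trace-integral term, and then verify the Lie-derivative identity $\tfrac12\Delta(\mathscr{L}_\xi g)+Rm(\mathscr{L}_\xi g,\cdot)+\dive^*\dive(\mathscr{L}_\xi g) = -\tfrac12\Hess(\dive\xi) \cdot(\text{const})$ by a direct computation in the geodesic frame $\{E_i\}$, commuting covariant derivatives and using $R_{ij}=\lambda\delta_{ij}$ wherever a Ricci contraction appears. The main obstacle, and the place where sign and factor errors are most likely, is precisely this last identity: it requires correctly expanding $\nabla_k\nabla_k(\nabla_i\xi_j+\nabla_j\xi_i)$, repeatedly applying the stated commutation formula $\nabla_i\nabla_j\beta_{kl}-\nabla_j\nabla_i\beta_{kl} = -R_{ijkp}\beta_{pl}-R_{ijlp}\beta_{kp}$, and reconciling the curvature terms that appear against the $2R_{iklj}(\mathscr{L}_\xi g)_{kl}$ contribution — on a general Einstein manifold (not just a symmetric space) the curvature terms do not individually vanish, so the cancellation is genuinely a Bianchi-identity phenomenon rather than a pointwise triviality.
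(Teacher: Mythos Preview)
Your primary (computational) route is exactly the paper's approach, but your handling of $v_h$ is wrong at the outset and only partially repaired. You first assert $v_h = 0$; this is false. The paper computes directly that $\dive\dive h - \Delta(\Tr h) = \lambda\,\Tr h$ (using only the Einstein condition and commutation of covariant derivatives), so by uniqueness $v_h = \Tr h$, which is a nonzero function for generic $\xi$. The term $\tfrac12\Hess v_h$ therefore does not drop out: it is precisely the $\tfrac12\nabla_j\nabla_k v_h$ that the paper produces inside the expansion of $-2(\dive^*\dive h)_{jk}$ and that is needed to close the identity
\[
\tfrac12\Delta h + Rm(h,\cdot) + \dive^*\dive h + \tfrac12\Hess v_h = 0.
\]
Your later hedging (``$v_h$ satisfies the same elliptic equation as $\dive\xi$ scaled appropriately'') lands near the right answer but never pins down the function, and you never actually perform the curvature-commutation bookkeeping---which is where the content lives. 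The paper carries this out in two pages of index computation, using $\nabla_i R_{jikp}=0$ (parallel Ricci) and the first Bianchi identity to collapse the curvature terms into $2Rm(h,\cdot)$; nothing conceptual beyond that, but the cancellations are nontrivial and your sketch does not supply them.

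Your alternative ``clean route'' via diffeomorphism invariance plus self-adjointness is logically sound \emph{if} you can first prove $N(\mathrm{Im}\,\dive^*)\subseteq\mathrm{Im}\,\dive^*$, but you give no argument for that mapping property, and proving it is not obviously easier than the direct computation (each piece of $N$---rough Laplacian, curvature action, $\dive^*\dive$, and $\Hess v_h$---must individually or jointly preserve the image of $\dive^*$, and the $\Hess v_h$ term again depends on knowing $v_h$). So this route is not a shortcut as stated.
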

\begin{proof}
Let $\xi_i = g(\xi, E_i)$ be the components of $\xi$. Then we have
\[
h_{ij} = 2\left(\mathscr{L}_{\xi} g\right)_{ij} = \nabla_i \xi_j + \nabla_j \xi_i \quad \text{for}\quad 1\leq i, j\leq n. 
\]
\begin{claim}
The unique solution $v_h$ is given by $v_h = \Tr h = 2\dive \xi$.
\end{claim}
First note that 
\begin{eqnarray*}
\nabla_i \nabla_j \nabla_i \xi_j & = & \nabla_j \nabla_i \nabla_i \xi_j - R_{ijip}\nabla_p \xi_j - R_{ijjp}\nabla_i \xi_p \\
& = & \nabla_j \nabla_i \nabla_i \xi_j + R_{jp}\nabla_p \xi_j - R_{ip} \nabla_i \xi_p \\
& = & \nabla_j \nabla_i \nabla_i \xi_j.
\end{eqnarray*}
Then we have
\begin{eqnarray*}
\dive \dive h - \Delta (\Tr h) & = & \nabla_j \nabla_i h_{ij} - \nabla_j \nabla_j h_{ii} \\
& = & \nabla_j \nabla_i \nabla_i \xi_j + \nabla_j \nabla_i \nabla_j \xi_i - 2\nabla_j \nabla_j \nabla_i \xi_i \\
& = & \nabla_j \left(\nabla_i \nabla_j \xi_i - \nabla_j \nabla_i \xi_i \right) + \nabla_j \nabla_i \nabla_j \xi_i - \nabla_j \nabla_j \nabla_i \xi_i \\
& = & \nabla_j (- R_{ijik} \xi_k) + \left(\nabla_i \nabla_j \nabla_i \xi_j - R_{jiik}\nabla_k \xi_j - R_{jijk}\nabla_i\xi_k\right) - \nabla_i \nabla_i \nabla_j \xi_j \\
& = & (\nabla_j R_{jiik})\xi_k + R_{jiik}\nabla_j \xi_k - R_{jk}\nabla_k \xi_j + R_{ik}\nabla_i\xi_k + \nabla_i \nabla_j \nabla_i \xi_j - \nabla_i \nabla_i \nabla_j \xi_j \\
& = & R_{jk}\nabla_j \xi_k + \nabla_i \nabla_j \nabla_i \xi_j - \nabla_i \nabla_i \nabla_j \xi_j \\
& = & \lambda \nabla_i \xi_i + \nabla_i\left(-R_{jijk}\xi_k\right) \\
& = & \lambda \nabla_i \xi_i + R_{ik}\nabla_i \xi_k \\
& = & 2\lambda \nabla_i \xi_i,
\end{eqnarray*}
i.e.,
\[
\Delta (\Tr h) + \lambda \Tr h = \dive \dive h,
\]
and this shows that $v_h = \Tr h$. 

Next we compute the $2$-tensor $\dive^* \dive h$. First we have
\begin{eqnarray*}
-2 (\dive^* \dive h)_{jk} & = & \nabla_j \nabla_i h_{ik} + \nabla_k \nabla_i h_{ij} \\
& = & \nabla_j \nabla_i \nabla_i \xi_k + \nabla_j \nabla_i \nabla_k \xi_i + \nabla_k \nabla_i \nabla_i \xi_j + \nabla_k \nabla_i \nabla_j \xi_i \\
& = & \nabla_i \nabla_j \nabla_i \xi_k - R_{jiip}\nabla_p \xi_k - R_{jikp}\nabla_i\xi_p + \nabla_j\left(\nabla_k \nabla_i \xi_i  - R_{ikip}\xi_p\right) \\
& & + \nabla_i \nabla_k \nabla_i \xi_j - R_{kiip}\nabla_p \xi_j - R_{kijp}\nabla_i \xi_p + \nabla_k\left(\nabla_j \nabla_i \xi_i - R_{ijip}\xi_p\right).
\end{eqnarray*}
Note that $\nabla_j \left(R_{ikip}\xi_p\right) = - R_{kp} \nabla_j \xi_p$ as $\Ric = \lambda g$. We rearrange the terms and obtain
\begin{eqnarray*}
-2 (\dive^* \dive h)_{jk} & = & \nabla_i \nabla_j \nabla_i \xi_k + \nabla_j \nabla_k \nabla_i \xi_i + \nabla_i \nabla_k \nabla_i \xi_j + \nabla_k \nabla_j \nabla_i \xi_i \\
& & - R_{jp}\nabla_p \xi_k + R_{kp}\nabla_j \xi_p - R_{kp}\nabla_p \xi_j + R_{jp}\nabla_k \xi_p - R_{jikp}\nabla_i \xi_p - R_{kijp}\nabla_i\xi_p \\
& = & \nabla_i \nabla_j \nabla_i \xi_k + \nabla_j \nabla_k \nabla_i \xi_i + \nabla_i \nabla_k \nabla_i \xi_j + \nabla_k \nabla_j \nabla_i \xi_i \\
& & - R_{jikp}\nabla_i \xi_p - R_{kijp}\nabla_i \xi_p \\
& = & \nabla_i \nabla_i \nabla_j \xi_k - \nabla_i\left(R_{jikp}\xi_p\right) + 2 \nabla_j \nabla_k \left(\nabla_i \xi_i\right) + \nabla_i \nabla_i \nabla_k \xi_j - \nabla_i\left(R_{kijp}\xi_p\right) \\
& &  - R_{jikp}\nabla_i \xi_p - R_{kijp}\nabla_i \xi_p \\
& = & \Delta h_{ij} + \nabla_j \nabla_k v_h - R_{jikp}\nabla_i \xi_p - R_{kijp}\nabla_i \xi_p - R_{jikp}\nabla_i \xi_p - R_{kijp}\nabla_i \xi_p.
\end{eqnarray*}
For the last equation we used that $\nabla_i R_{jikp} = \nabla_i R_{kijp} = 0$ since $\Ric$ is parallel. We evaluate
\begin{eqnarray*}
Rm(h, \cdot)_{jk} & = & R_{jipk}h_{ip} = R_{jipk}\nabla_i \xi_p + R_{jipk}\nabla_p \xi_i \\
& = & R_{jipk}\nabla_i \xi_p + R_{jpik}\nabla_i \xi_p \\
& = & - R_{jikp}\nabla_i \xi_p - R_{kijp}\nabla_i \xi_p.
\end{eqnarray*}
It follows that
\[
-2 \left(\dive^* \dive h\right)_{jk} = \Delta h_{jk} + \nabla_j \nabla_k v_h + 2Rm(h,\cdot)_{jk},
\]
i.e.,
\begin{equation*}
\frac{1}{2}\Delta h + \dive^* \dive h + \frac{1}{2}\Hess v_h + Rm(h, \cdot) = 0.
\end{equation*}
So we have $Nh = 0$ as $\int_M \Tr h dV = 0$. This finishes the proof.
\end{proof}

The following lemma of the stability along conformal variations is stated in \cite{CaoHamiltonIlmanen}, and the outline of the proof is also provided there. We provide the details of the proof for the convenience of the reader. 

\begin{lem}[Cao-Hamilton-Ilmanen]\label{lem:stabilityconformal}
Suppose that $(M^n, g)$ is an Einstein manifold with Ricci curvature $\lambda > 0$ other than the standard sphere. The second variation of the $\nu$-entropy on conformal variations is unstable if and only if the first (nonzero) eigenvalue of Laplacian on functions $\mu_{\text{fns}}$ satisfies
\[
-2\lambda < \mu_{\text{fns}} <   -\frac{n}{n-1}\lambda.
\]
\end{lem}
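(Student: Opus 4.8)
The plan is to substitute $h=fg$ into the stability operator $N$ of Section~3, compute $\langle Nh,h\rangle$, integrate over $M$, and diagonalize the resulting quadratic form in eigenfunctions of the Laplacian. First, since $\nu$ is scale invariant and $g$ is a critical point, the line $\Real\cdot g$ lies in the radical of the symmetric bilinear form $\delta^2\nu_g$; hence $\delta^2\nu_g(fg,fg)=\delta^2\nu_g(f_0g,f_0g)$ with $f_0=f-\bar f$ of zero mean, and we may assume $\int_M f\,dV=0$ throughout. With this normalization the last term $-\frac{g}{2n\tau\vol(M,g)}\int_M\Tr_g h\,dV$ in the formula for $N$ drops out.

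Next I would record the elementary identities for $h=fg$, using that $g$ is parallel, $R_{ij}=\lambda g_{ij}$, and $\tfrac{1}{2\tau}=\lambda$: one has $\Delta h=(\Delta f)g$, $Rm(h,\cdot)=\lambda fg$, $\dive h=df$, hence $\dive^*\dive h=-\Hess f$ and $\dive\dive h=\Delta f$, so that $v_h$ is the unique solution of $\Delta v_h+\lambda v_h=\Delta f$ (uniqueness being precisely the Lichnerowicz bound on $\mu_{\text{fns}}$). Taking the inner product with $h=fg$ and integrating over $M$ gives
\[
\int_M\langle Nh,h\rangle\,dV=\frac{n-2}{2}\int_M f\,\Delta f\,dV+n\lambda\int_M f^2\,dV+\frac12\int_M f\,\Delta v_h\,dV .
\]

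Then I would expand $f=\sum_{k\ge1}a_k\phi_k$ in an $L^2$-orthonormal basis of eigenfunctions $\Delta\phi_k=\mu_k\phi_k$, with $\mu_1=\mu_{\text{fns}}$ the largest nonzero eigenvalue, solve the auxiliary equation mode by mode as $v_h=\sum_k\frac{\mu_k}{\mu_k+\lambda}a_k\phi_k$ (legitimate since $\mu_k\le\mu_{\text{fns}}<-\frac{n}{n-1}\lambda<-\lambda$ by Lichnerowicz--Obata, $M$ not being the round sphere), and obtain $\int_M\langle Nh,h\rangle\,dV=\sum_{k\ge1}c_k a_k^2$ with
\[
c_k=\frac{n-2}{2}\mu_k+n\lambda+\frac{\mu_k^2}{2(\mu_k+\lambda)}
=\frac{\bigl((n-1)\mu_k+n\lambda\bigr)\,(\mu_k+2\lambda)}{2(\mu_k+\lambda)} .
\]
The crucial algebraic point is that the numerator, as a quadratic in $\mu_k$, has discriminant $(n-2)^2\lambda^2$ — a perfect square — so it factors with roots exactly $-2\lambda$ and $-\frac{n}{n-1}\lambda$.

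Finally, the sign analysis: for every $k\ge1$ the bound $\mu_k\le\mu_{\text{fns}}<-\frac{n}{n-1}\lambda$ forces both $\mu_k+\lambda<0$ and $(n-1)\mu_k+n\lambda<0$, so the prefactor $\frac{(n-1)\mu_k+n\lambda}{2(\mu_k+\lambda)}$ is positive and $\mathrm{sign}(c_k)=\mathrm{sign}(\mu_k+2\lambda)$. Hence $\delta^2\nu_g$ is positive on some conformal variation if and only if $c_k>0$ for some $k$, i.e.\ $\mu_k+2\lambda>0$ for some $k$, i.e.\ (as the $\mu_k$ are nonincreasing) $\mu_{\text{fns}}>-2\lambda$; together with the automatic upper bound $\mu_{\text{fns}}<-\frac{n}{n-1}\lambda$ this is the asserted two-sided inequality. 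I do not expect a genuine obstacle — the computation is routine — but one must be careful about the reduction to mean-zero $f$, the (negative) sign convention for the Laplacians, and checking that the discriminant is a perfect square, since that is exactly what makes the two critical values $-2\lambda$ and $-\frac{n}{n-1}\lambda$ appear.
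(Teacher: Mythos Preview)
Your proof is correct and reaches exactly the same quadratic form as the paper, but the route is genuinely different. The paper does not solve the auxiliary equation $\Delta v_h+\lambda v_h=\Delta f$ directly. Instead it writes $f=\Delta u+\lambda u$ for a potential $u$, introduces the divergence-free tensor $S(u)=(\Delta u)g-\Hess u+\lambda u\,g$, observes that $\Delta_L S(u)=S(\Delta u)$, and then uses Lemma~\ref{lem:Nimdiv} (that $N$ vanishes on $\mathrm{Im}\,\dive^*$) to replace $N(fg)$ by $N(S(u))$; since $S(u)$ is transversal, both $v_{S(u)}$ and $\dive^*\dive S(u)$ vanish, and $N$ collapses to $\tfrac12\Delta_L+\lambda$ on $S(u)$. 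Expanding $u$ in eigenfunctions then gives $2\int_M\langle N(fg),fg\rangle\,dV=\sum_{i\ge1}(k_i+\lambda)((n-1)k_i+n\lambda)(k_i+2\lambda)a_i^2$, which is your formula after the change of coefficients $b_i=(k_i+\lambda)a_i$. Your approach is more elementary --- it needs neither Lemma~\ref{lem:Nimdiv} nor the operator $S$ --- at the cost of solving for $v_h$ mode by mode; the paper's approach is more structural and makes the appearance of $\Delta_L$ transparent, but it relies on the prior lemma. Your handling of the constant mode via scale invariance is also cleaner than the paper's, which carries the $a_0$-term through the computation and cancels it at the end.
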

\begin{proof}
By Remark 1.2 (1), it suffices to show the bound $-2\lambda  < \mu_{\text{fns}}$. 
For any smooth function $f\in \mathscr{C}M$, since  $\Delta + \lambda$ is invertible  there exists a function $u$ which is the unique solution to the equation
\[
\Delta u + \lambda u = f.
\]
Following \cite{CaoHamiltonIlmanen}, for any function $u \in \mathscr{C}M$ let
\[
S(u) = (\Delta u) g - \mathrm{Hess} u + u \lambda g.
\]
Then, by direct computations, one can check that the symmetric $2$-tensor $S(u)$ is divergence free and 
\[
\Delta_L (S(u))=S(\Delta u). 
\]
Since $\Hess u \in \mathrm{Im}\dive^*$, we have 
\[
N (fg) = N\left((\Delta u + \lambda u) g\right) = N(S(u)))
\]
and
\begin{eqnarray*}
\int_M \Tr S(u) dV & = & \int_M \left((n-1)\Delta u + n \lambda u \right) dV = n \int_M \lambda u dV \\
& = & n \int_M f dV.
\end{eqnarray*}
Hence, we have 
\begin{eqnarray*}
N(f g) & = & N(S(u))\\
& = & \half \Delta (S(u)) + Rm(S(u), \cdot) - \frac{\lambda g}{n \vol(g)}\int_M \Tr S(u) dV \\
& = & \half \Delta_L (S(u)) + \lambda S(u) - \frac{\lambda \int_M f dV}{\vol(g)} g \\
& = & \half S(\Delta u)+ \lambda S(u) - \frac{\lambda \int_M f dV}{\vol(g)} g. 
\end{eqnarray*}
Note that in the last equality we used $\Delta_L (S(u)) = S(\Delta u)$. In the following we assume that $\vol(g) =1$ and write 
\[
u = \sum_{i=0}^{\infty} a_i \phi_i
\]
where $\set{\phi_i}_{i=0}^{\infty}$ are eigenfunctions of $\Delta$ that give a $L^2$-orthonormal basis of $\mathscr{C}M$. Let $k_i$ be the eigenvalue of $\Delta$ on $\phi_i$ with $k_0 = 0$. It follows that 
\begin{eqnarray*}
f = \sum_i (k_i + \lambda) a_i \phi_i, & & \int_M f dV = \lambda a_0; \\
\Delta u = \sum_i k_i a_i \phi_i, & & \Delta(\Delta u) = \sum_i k_i^2 a_i \phi_i.
\end{eqnarray*}
So we have
\begin{eqnarray*}
\Tr S(\Delta u) + 2\lambda \Tr S(u) & = & \left((n-1)\Delta (\Delta u) + n \lambda \Delta u\right) +  2\lambda\left((n-1)\Delta u + n \lambda u\right) \\
& = & \sum_{i}\left((n-1)k_i + n \lambda \right) (k_i + 2\lambda) a_i \phi_i.
\end{eqnarray*}
It follows that 
\begin{eqnarray*}
2\int_M g(N (fg), fg) dV & = & \int_M f\sum_i \left((n-1)k_i + n \lambda \right) (k_i + 2\lambda) a_i \phi_i dV - 2n\lambda^3 a_0^2 \\
& = & \sum_i (k_i + \lambda)\left((n-1)k_i + n \lambda \right) (k_i + 2\lambda) a_i^2 - 2n\lambda^3 a_0^2 \\
& = & \sum_{i\geq 1} (k_i + \lambda)((n-1)k_i + n \lambda )(k_i + 2\lambda)a_i^2.
\end{eqnarray*}
Since $(M, g)$ is not the standard sphere, we have $(k_i + \lambda)((n-1)k_i+ n \lambda) > 0$. It follows that $\int_M g(N (fg),fg)dV > 0$ for some $f$ if and only if $k_1 + 2\lambda > 0$, i.e., the second variation in conformal directions is unstable if and only if the first nonzero eigenvalue $\mu_{\text{fns}}$ of Laplacian on functions is strictly bounded below by $-2\lambda$.
\end{proof}

Using the previous lemmas, we can now prove Theorem \ref{thm:CHI2ndvariation} stated in the Introduction.

\begin{proof}[Proof of Theorem \ref{thm:CHI2ndvariation}]
From Lemmas \ref{lem:Nimdiv} and \ref{lem:stabilityconformal} we only need to show that the second factor $\mathscr{C}M \cdot g$ and the third factor $\ker \dive \cap \ker \Tr$ are orthogonal to each other with respect to the bi-linear form $\int_M \langle N \cdot, \cdot \rangle dV$. Suppose that $h \in \ker \dive \cap \ker \Tr$ and $ u \in C^\infty(M)$. Then we have
\[
N h = \half \Delta h + Rm(h, \cdot)
\] 
and  
\begin{eqnarray*}
\int_M \langle N h, u g\rangle dV & = & \int_M \left(\half g (u g, \Delta h) + Rm(h, u g)\right) dV \\
& = & \int_M \half  g(h, \Delta (u g)) dV + \int_M u g(\Ric , h) dV \\
& = & \half \int_M \Delta u (\Tr h) dV + \int_M \lambda u (\Tr h) dV \\
& = & 0.
\end{eqnarray*}
This finishes the proof of the theorem.
\end{proof}

\medskip{}

\section{The linear stability of symmetric spaces of compact type}

In \cite{CaoHamiltonIlmanen} the linear stability of several examples of compact Einstein manifolds with positive Ricci curvature has been studied. In this section we prove the full classification of linear stability on symmetric spaces of compact type, see Theorem \ref{thm:classificationcompactSS}.

By the work of Hamilton, Ilmanen and the first author \cite{CaoHamiltonIlmanen}, it is known that any product metric of two positive Einstein manifolds (of the same Einstein constant) is unstable, thus 
we may restrict our attention to irreducible ones. We also exclude the case when the manifold is the standard sphere,  as it is geometrically stable from the results of R. Hamilton in \cite{Hamilton3d,Hamilton4d,Hamilton2d} and G. Huisken in \cite{Huisken}. 

Recall that the stability operator restricted to transversal traceless symmetric $2$-tensors is given by 
\[
N h = \half \Delta h + Rm(h, \cdot) = \half \left(\Delta_L h + 2\lambda h\right) \quad \text{for any }  h \in \ker\dive\cap \ker \Tr.
\]
The stability of Einstein metrics on symmetric spaces $(\sfG/\sfK, g)$ with respect to Hilbert action has been studied by N. Koiso. When $\sfG/\sfK$ is of compact type, from his results in \cite{KoisocompactSS} and \cite{Koisorigidity} we have the following 
\begin{thm}[Koiso]\label{thm:Koiso}
Let $M = \sfG/\sfK$ be a simply-connected irreducible symmetric space of compact type other than the standard sphere. Then we have 
\begin{enumerate}
\item the stability operator of $M$ has zero eigenvalue in $\ker\dive\cap \ker \Tr$ if $M$ is one of the following spaces:
\begin{eqnarray*}
& \sfSU(n+1)(n \geq 2), \quad \sfU(p+q)/\sfU(p)\times \sfU(q)(p\geq q\geq 2), & \\
& \sfSU(n)/\sfSO(n) (n\geq 3), \quad \sfSU(2n)/\sfSp(n)(n\geq 3), \quad \sfE_6/\sfF_4; & 
\end{eqnarray*}
\item the stability operator of $M$ has positive eigenvalue in $\ker \dive\cap \ker \Tr$, i.e., $M$ is unstable with respect to the Hilbert action, if $M$ is one of the following spaces:
\[
\sfSp(n)(n \geq 2) \quad {\mbox or} \quad \sfSp(n)/\sfU(n)(n\geq 3);
\]
\item the stability operator of $M$ is negative definite in $\ker\dive\cap \ker \Tr$, i.e., $M$ is stable with respect to Hilbert action, if $M$ is not in Cases (1), (2) or the following spaces:
\[
\sfSp(p+q)/\sfSp(p)\times \sfSp(q)(p=2, q=1\text{ or }p\geq q\geq 2), \quad \sfF_4/\Spin(9).
\]
\end{enumerate}
\end{thm}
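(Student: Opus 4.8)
The plan is to recall the structure of Koiso's argument, which is essentially a reduction of the stability question on a symmetric space to a problem in the representation theory of $\sfG$. First I would set up the harmonic analysis. On an irreducible symmetric space $M=\sfG/\sfK$, normalized so that $\Ric=\lambda g$, the Peter--Weyl theorem together with Frobenius reciprocity decomposes $C^\infty(S^2(T^*M))$ as a $\sfG$-module into $\bigoplus_{\gamma\in\hat{\mathsf{G}}} V_\gamma\otimes\mathrm{Hom}_\sfK\!\bigl(V_\gamma, S^2(\mathfrak p^*)\bigr)$, where $\mathfrak p$ is the isotropy representation. Since $M$ is symmetric, the Lichnerowicz Laplacian $\Delta_L$, the trace $\Tr$ and the divergence $\dive$ are all $\sfG$-equivariant; hence $\ker\dive\cap\ker\Tr$ is again a sub-sum of the same decomposition, and on the $\gamma$-isotypic piece $\Delta_L$ acts as a scalar which, by Koiso's lemma identifying $\Delta_L$ with the Casimir operator of $\sfG$ on sections (up to a fixed normalization), equals $-c_\gamma$ for the Casimir eigenvalue $c_\gamma\ge 0$. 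A first technical point is to determine precisely which $\gamma$ survive the two conditions $\Tr h=0$ and $\dive h=0$.

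Next I would translate stability into an inequality among Casimir eigenvalues. By the Corollary following Theorem \ref{thm:CHI2ndvariation} (equivalently by the definition of stability with respect to $\Hil$), the metric is stable iff $\Delta_L+2\lambda<0$ on all transversal traceless tensors, i.e. iff every $\gamma$ contributing such a tensor satisfies $c_\gamma>2\lambda$; it is unstable iff some contributing $\gamma$ has $c_\gamma<2\lambda$; and the stability operator has a zero eigenvalue on $\ker\dive\cap\ker\Tr$ iff $c_\gamma=2\lambda$ for some contributing $\gamma$ --- these last are exactly the symmetric spaces carrying nontrivial infinitesimal Einstein deformations. The threshold $2\lambda$ is itself a multiple of the Casimir eigenvalue of the isotropy module $\mathfrak p$, so the whole problem becomes: locate the minimal Casimir value among the relevant $\gamma$ and compare it with that threshold. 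The candidate ``small-Casimir'' representations are short to enumerate, because the $\sfK$-module $S^2(\mathfrak p)$ contains only $\mathbb R$ (the trace), one copy of $\mathfrak p$ (the piece responsible for $\mathrm{Im}\,\dive^*$), and a bounded number of other low-dimensional summands; under Frobenius reciprocity these attach to $\sfG$-modules such as $\mathrm{Sym}^2\mathfrak g$, $\Lambda^2\mathfrak g$, the adjoint module, or the defining module, whose Casimir eigenvalues are read off from the root data.

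Then I would carry out the case-by-case analysis over Cartan's list: the classical families of type $A$, $B$--$D$, $C$, the group manifolds $\sfG'\times\sfG'/\Delta\sfG'$, and the exceptional spaces. For each space one decomposes $S^2(\mathfrak p)$ as a $\sfK$-module, discards the trace summand and the $\mathfrak p$-summand, computes the Casimir eigenvalues $c_\gamma$ of the remaining $\sfG$-modules from highest-weight data, and compares with $2\lambda=\tfrac{2}{n}\scal$; this separates the list into Cases (1), (2) and (3). I expect the main obstacle to be precisely this comparison for the spaces where a summand of $S^2(\mathfrak p)$ lands very close to the threshold: for $\sfSp(p+q)/\sfSp(p)\times\sfSp(q)$ and $\sfF_4/\Spin(9)$ the gap is too small for the crude Casimir estimate to decide the sign, which is why the statement leaves these out --- and it is exactly these residual cases that the present paper will later settle by bringing in the first eigenvalue $\mu_{\text{fns}}$ on functions via Theorem \ref{thm:CHI2ndvariation}. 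A secondary difficulty is making the transversality bookkeeping exact, so that when several copies of the same $\sfG$-type occur in $S^2(\mathfrak p)$ one neither miscounts nor overlooks a transversal traceless summand; here Koiso's explicit construction of such tensors from $\sfK$-equivariant maps, and the identification of the kernel of the stability operator with the space of infinitesimal Einstein deformations, are what make the accounting rigorous.
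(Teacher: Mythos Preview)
Your outline is a faithful sketch of Koiso's original method (Peter--Weyl/Frobenius reciprocity on $C^\infty(S^2(T^*M))$, identification of $\Delta_L$ with the Casimir, and a case-by-case comparison of Casimir eigenvalues against the threshold $2\lambda$), and your diagnosis of why $\sfSp(p+q)/\sfSp(p)\times\sfSp(q)$ and $\sfF_4/\Spin(9)$ remain undecided is correct. However, the paper itself does \emph{not} prove Theorem~\ref{thm:Koiso}: it is stated as a quotation from Koiso's papers \cite{KoisocompactSS,Koisorigidity} and used as a black box. The only trace of the underlying mechanism in the present paper is the remark in Appendix~A that $\Delta_L=-C$ and hence $\mu_L\le -\min\{L_i\}$, together with the tabulated values of $\min\{\lambda^{-1}L_i\}$.

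So there is no discrepancy of method to report; rather, you have supplied a proof plan where the paper supplies only a citation. If your goal is to match the paper, the correct ``proof'' is simply a reference to \cite{KoisocompactSS,Koisorigidity}. If instead you intend to reconstruct Koiso's argument, your plan is on the right track, with two caveats worth flagging. First, the Casimir bound $\mu_L\le -\min\{L_i\}$ only gives an upper estimate for $\mu_L$; to conclude \emph{stability} (Case~(3)) one must also verify that the representations realizing $\min\{L_i\}$ actually contribute to $\ker\dive\cap\ker\Tr$, or else push the bound further---this is the ``transversality bookkeeping'' you mention, and it is genuinely the crux. Second, the infinitesimal-deformability conclusions in Case~(1) require Koiso's companion paper \cite{Koisorigidity}, not just \cite{KoisocompactSS}; your plan should make that dependence explicit.
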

\begin{rem}
Note that from \cite{KoisocompactSS}, the first eigenvalue $\mu_L$ on $\sfSO(5)/\sfSO(3)\times \sfSO(2)$ is only bounded above by $-\frac{4}{3}\lambda$. However, it was shown by Gasqui and Goldschmidt \cite{GasquiGoldschmidt} that $\mu_L = -\frac{4}{3}\lambda$, which implies that the symmetric metric on $\sfSO(5)/\sfSO(3)\times \sfSO(2)$ is unstable with respect to the Hilbert action. 
\end{rem}

Combining Koiso's results in Theorem 4.1, Remark 4.2, and the first eigenvalues $\mu_{\text{fns}}$ of simply-connected irreducible symmetric spaces $\sfG/\sfK$ of compact type, see \cite{Urakawa} and \cite{Nagano}, we have the following classification results. 

\begin{thm}\label{thm:classificationcompactSS}
Let $\sfG/\sfK$ be a simply-connected irreducible symmetric space of compact type other than the standard sphere. Then we have
\begin{enumerate}
\item  $\sfG/\sfK$ is linearly unstable with respect to Perelman's $\nu$-entropy if it is one of the followings:
\begin{enumerate}
\item the simple Lie groups $\sfSU(n+1)$($n \geq 2$), $\sfSp(n)$($n \geq 2$), 
\item the complex hyperquadric $\sfSO(5)/\sfSO(3)\times \sfSO(2)$ and the quaternionic Grassmannians $\sfSp(p+q)/\sfSp(p)\times \sfSp(q)$($p\geq q \geq 1$, $p+q\geq 3$), 
\item the Cayley projective plane $\sfF_4/\Spin(9)$ and the spaces $\sfSU(2n)/\sfSp(n)$($n\geq 3$), $\sfSp(n)/\sfU(n)$($n\geq 3$), $\sfE_6/\sfF_4$;
\end{enumerate}
\item $\sfG/\sfK$ is neutrally linearly stable with respect to Perelman's $\nu$-entropy if it is one of the followings:
\begin{enumerate}
\item the simple Lie group $\sfG_2$,
\item the complex Grassmannians $\sfU(p+q)/\sfU(p)\times \sfU(q)$($p\geq q \geq 1, p+q\geq 3$),
\item the complex hyperquadrics $\sfSO(n+2)/\sfSO(n)\times \sfSO(2)$($n\geq 5$), 
\item the spaces $\sfSU(n)/\sfSO(n)$($n\geq 3$), $\sfSO(2n)/\sfU(n)$($n\geq 5$), $\sfE_6/\sfSO(10)\cdot \sfSO(2)$ and $\sfE_7/\sfE_6\cdot \sfSO(2)$;
\end{enumerate}
Furthermore, except $\sfSU(n)/\sfSO(n)$ they have neutrally stable deformation along conformal directions and $\sfU(p+q)/\sfU(p)\times \sfU(q)$ also have neutrally stable deformation along traceless transversal symmetric $2$-tensors;
\item $\sfG/\sfK$ is linearly stable without neutrally stable deformation with respect to Perelman's $\nu$-entropy, if it is not in the previous two cases, i.e., it is one of the followings:
\begin{enumerate}
\item the simple Lie groups $\Spin(n)$($n\geq 7$), $\sfE_6$, $\sfE_7$, $\sfE_8$, $\sfF_4$,
\item the real Grassmannians $\sfSO(p+q)/\sfSO(p)\times \sfSO(q)$($p\geq q\geq 3$, $p+q\geq 7$),
\item the following symmetric spaces
\begin{eqnarray*}
& \sfE_6/[\sfSp(4)/\set{\pm I}], \, \sfE_6/\sfSU(2)\cdot \sfSU(6), \, \sfE_7/[\sfSU(8)/\set{\pm I}], \, \sfE_7/\sfSO'(12)\cdot \sfSU(2) & \\
& \sfE_8/\sfSO'(16), \quad \sfE_8/\sfE_7\cdot \sfSU(2), \quad \sfF_4/\sfSp(3)\cdot \sfSU(2), \quad \sfG_2/\sfSO(4).& 
\end{eqnarray*}
\end{enumerate}
\end{enumerate}
\end{thm}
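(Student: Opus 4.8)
The plan is to reduce the classification to a comparison of two eigenvalues with the single threshold $-2\lambda$, and then to extract those eigenvalues from Koiso's work and from the spectra of symmetric spaces collected in Appendix~A. First I would record the dichotomy that follows from Theorem~\ref{thm:CHI2ndvariation} and the corollary after it: since $M=\sfG/\sfK$ is irreducible and is not the standard sphere, it is linearly stable with respect to Perelman's $\nu$-entropy if and only if $\mu_{\text{fns}}\le -2\lambda$ and $\mu_L\le -2\lambda$, and it is neutrally linearly stable precisely when, in addition, at least one of these two inequalities is an equality. Moreover, by Lemma~\ref{lem:stabilityconformal} together with the Lichnerowicz--Obata bound $\mu_{\text{fns}}<-\frac{n}{n-1}\lambda$, strict because $M$ is not the round sphere, the conformal factor $\mathscr{C}M\cdot g$ is unstable when $\mu_{\text{fns}}>-2\lambda$, carries a neutral deformation when $\mu_{\text{fns}}=-2\lambda$, and is negative definite when $\mu_{\text{fns}}<-2\lambda$; by Theorem~\ref{thm:CHI2ndvariation}(3) the same trichotomy holds on $\ker\dive\cap\ker\Tr$ with $\mu_L$ in place of $\mu_{\text{fns}}$. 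Thus the whole classification, including the assertions about which neutral deformations actually occur, is equivalent to locating $\mu_{\text{fns}}$ and $\mu_L$ relative to $-2\lambda$ for every simply-connected irreducible symmetric space of compact type.

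For the transversal traceless directions I would invoke Theorem~\ref{thm:Koiso}. Because, by Theorem~\ref{thm:CHI2ndvariation}(3), the restriction of $\delta^2\nu_g$ to $\ker\dive\cap\ker\Tr$ coincides with the Hessian of the Hilbert action there, Koiso's three cases translate directly into $\mu_L=-2\lambda$ in Case~(1), $\mu_L>-2\lambda$ in Case~(2), and $\mu_L<-2\lambda$ in Case~(3). For $\sfSO(5)/\sfSO(3)\times\sfSO(2)$, where Koiso only gives $\mu_L\le-\frac43\lambda$, I would use the computation of Gasqui and Goldschmidt that $\mu_L=-\frac43\lambda>-2\lambda$, so this space is unstable. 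This pins down $\mu_L$ versus $-2\lambda$ for every space except the two families Koiso leaves open, namely the quaternionic Grassmannians $\sfSp(p+q)/\sfSp(p)\times\sfSp(q)$ in his exceptional list and $\sfF_4/\Spin(9)$; for these the transversal traceless side is inconclusive, and I will fall back on the conformal side.

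For the conformal side I would compute $\mu_{\text{fns}}$, the first nonzero eigenvalue of the Laplacian on functions. For an irreducible symmetric space $\sfG/\sfK$ this is, up to the normalization fixed by the Einstein constant, the Casimir eigenvalue of the first nontrivial spherical representation, computed from the Freudenthal formula; these values together with $\lambda$ are what the tables of Appendix~A record, following Urakawa and Nagano. Comparing each entry with $-2\lambda$, one finds $\mu_{\text{fns}}>-2\lambda$ for the exceptional spaces $\sfSp(p+q)/\sfSp(p)\times\sfSp(q)$ ($p\ge q\ge1$, $p+q\ge3$) and $\sfF_4/\Spin(9)$, so that these are linearly unstable regardless of $\mu_L$ --- this is exactly how the gaps in Koiso's list are closed --- and likewise $\mu_{\text{fns}}>-2\lambda$ for $\sfSU(n+1)$, $\sfSU(2n)/\sfSp(n)$ and $\sfE_6/\sfF_4$, which lie in Koiso's Case~(1) and would otherwise be neutrally stable but are in fact unstable; on the other hand $\mu_{\text{fns}}=-2\lambda$ for $\sfG_2$, the hyperquadrics $\sfSO(n+2)/\sfSO(n)\times\sfSO(2)$ ($n\ge5$), $\sfSO(2n)/\sfU(n)$ ($n\ge5$), $\sfE_6/\sfSO(10)\cdot\sfSO(2)$, $\sfE_7/\sfE_6\cdot\sfSO(2)$ and the complex Grassmannians $\sfU(p+q)/\sfU(p)\times\sfU(q)$, the last also lying in Koiso's Case~(1) so that $\mu_L=-2\lambda$ for them too; and $\mu_{\text{fns}}<-2\lambda$ for all remaining spaces, so that those falling in Koiso's Case~(3) are linearly stable with no neutral deformation at all.

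Finally I would assemble the classification by running once through the list of simply-connected irreducible symmetric spaces of compact type --- the eight families of simple compact Lie groups $\sfSU$, $\sfSO$, $\sfSp$, $\sfG_2$, $\sfF_4$, $\sfE_6$, $\sfE_7$, $\sfE_8$ and the non-group types $AI$, $AII$, $AIII$, $BDI$, $CI$, $CII$, $DIII$, $EI$--$EIX$, $FI$, $FII$, $G$ --- combining for each one the value of $\mu_{\text{fns}}$ from Appendix~A with the information on $\mu_L$ from Theorem~\ref{thm:Koiso} and Gasqui--Goldschmidt (or, for the two exceptional families, with only the inequality $\mu_{\text{fns}}>-2\lambda$), and applying the dichotomy recorded above. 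This produces exactly the lists (1), (2) and (3). I expect the main obstacle to be the bookkeeping on the conformal side: identifying the spherical representation and evaluating its Casimir eigenvalue for every space, and normalizing it correctly against the Einstein constant --- a computation sensitive both to the choice of invariant metric and to low-dimensional coincidences such as $\sfSO(6)\cong\sfSU(4)$, $\sfSp(2)\cong\Spin(5)$ and $\Qua\mathrm{P}^1=\sph^4$ --- since it is precisely the distinction between $\mu_{\text{fns}}>-2\lambda$, $\mu_{\text{fns}}=-2\lambda$ and $\mu_{\text{fns}}<-2\lambda$ that separates linear instability, neutral linear stability and strict linear stability.
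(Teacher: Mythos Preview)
Your proposal is correct and follows essentially the same route as the paper: reduce via Theorem~\ref{thm:CHI2ndvariation} to comparing $\mu_{\text{fns}}$ and $\mu_L$ against $-2\lambda$, read off $\mu_L$ from Koiso's theorem (with Gasqui--Goldschmidt for $Q^3$), read off $\mu_{\text{fns}}$ from the tables in Appendix~A, and observe that the two Koiso-undecided families are disposed of by conformal instability. The only cosmetic difference is that the paper packages most of the $\mu_{\text{fns}}=-2\lambda$ cases by invoking the single fact that irreducible Hermitian symmetric spaces satisfy $\mu_{\text{fns}}=-2\lambda$, whereas you extract each value individually from the tables; both arguments are the same in substance.
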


\begin{proof}
We show the classifications in cases (1) and (2) and then case (3) follows directly. From the first eigenvalue of Laplacian in \cite{Urakawa} (or see Tables in Appendix A), the following spaces are linearly unstable along conformal variations:
\begin{equation*}
\sfSU(n+1), \quad \sfSp(n), \quad \sfSU(2n)/\sfSp(n), \quad \sfSp(p+q)/\sfSp(p) \times\sfSp(q), \quad \sfE_6/\sfF_4, \quad \sfF_4/\Spin(9).
\end{equation*}
Note that the two exceptional examples in case (3) of Theorem \ref{thm:Koiso} for which the linear stability restricted to $\ker \dive \cap \ker \Tr$ is undecided are linearly unstable along conformal variations. From  case (2) in Theorem \ref{thm:Koiso} and Remark 4.2, there are two more examples of linearly unstable case which are $\sfSp(n)/\sfU(n)$ and $\sfSO(5)/\sfSO(3)\times \sfSO(2)$. They give all examples of linearly unstable case. 

An irreducible symmetric space of compact type has $\mu_{\text{fns}} = - 2\lambda$ if it is hermitian. It follows that a hermitian symmetric space of compact type is neutrally linearly stable if it is not unstable with respect to the Hilbert action. Hence it is one of the examples in case (2) except $\sfSU(n)/\sfSO(n)$.  For this latter example we have $\mu_L = - 2\lambda$ by \cite{KoisocompactSS} and so it is also neutrally linearly stable. 
\end{proof}

\medskip{}

\appendix

\section{First eigenvalues of Laplacian on functions and Lichnerowicz Laplacian on traceless transversal symmetric $2$-tensors}

We collect the first eigenvalues of Laplacian on functions for compact symmetric spaces, see also Tables A.1 and A.2 in \cite{Urakawa}. Note that the case of $\text{E III}(\sfE_6/\sfSO(10)\cdot \sfSO(2))$ is dropped there. For the Lichnerowicz Laplacian on symmetric $2$-tensors Koiso \cite{KoisocompactSS} showed that it is given by $\Delta_L = - C$ where $C$ is the Casimir operator of the $\sfK$ representation on the $2$-tensor bundle of $TM^\Cpx$, where $TM^\Cpx$ the complexified tangent bundle. It follows that the maximal eigenvalue $\mu_L$ of the Lichnerowicz Laplacian is bounded above by the smallest eigenvalue of the Casimir operator $C$ restricted to the traceless symmetric $2$-tensors, i.e., $\mu_L \leq -\min\set{L_i}$.  
 
In Tables \ref{tab:eigenvaluesgroups} and \ref{tab:eigenvaluesGK} we list the first eigenvalue $\mu_{\text{fns}}$ of Laplacian on functions and $\min\set{\lambda^{-1}L_i}$ for upper bound of the first eigenvalues $\mu_L$ of the Lichnerowicz Laplacian. An Einstein metric is called \emph{infinitesimal deformable}, or \emph{i-deformable} if there is a nonzero $h \in \ker\dive \cap \ker \Tr$ such that $\Delta_L h + 2\lambda h = 0$. In both tables ``i.d.", ``H.stable" and ``H.unstable" stand for ``i-deformable", ``stable with respect to Hilbert action", and ``unstable with respect to Hilbert action" respectively. The last column is for the linearly stability with respect to Perelman's $\nu$-entropy. ``l.stable", ``n.l.stable" and ``l.unstable" stand for ``linearly stable but not neutrally linearly stable", ``neutrally linearly stable" and ``linearly unstable".

The linear stability of a few symmetric spaces with respect to Perelman's $\nu$-entropy can be read off directly from the tables. For example, if both $-\lambda^{-1}\mu_{\text{fns}}$ and $\min\set{\lambda^{-1}L_i}$ are greater or equal to $2$, then $\sfG$ or $\sfG/\sfK$ is linearly stable. If $-\lambda^{-1} \mu_{\text{fns}} < 2$, then it is linearly unstable. However $\min\set{\lambda^{-1}L_i} < 2$ does not yield $\mu_L > -2\lambda$. For example, on $\sfSp(4)/\sfSp(3)\times \sfSp(1)$, we have $\lambda = 10$, $L_1 = 16$ and $L_2 = 36$. This space is stable with respect to the Hilbert action, i.e., $\mu_L < -2\lambda$. 

\medskip{}

\begin{table}[!htp]
\begin{tabular}{|l|l|c|c|l|l|}
\hline
type & $\sfG$ & $-\lambda^{-1} \mu_{\text{fns}}$ & $\min \lambda^{-1}L_i$ & H.stability & l.stability \\
\hline \hline
A$_n$ & $\sfSU(n+1)$\fts{($n\geq 2$)} & $\frac{2n(n+2)}{(n+1)^2}$ & $\frac{2n(n+2)}{(n+1)^2}$ & i.d. & l.unstable \\
\hline
\multirow{3}{*}{B$_n$} & $\Spin(5)$ & $\frac{5}{3}$ & $\frac{4}{3}$ & H.unstable & l.unstable \\
& $\Spin(7)$ & $\frac{21}{10}$ & $\frac{12}{5}$ &  H.stable & l.stable \\
& $\Spin(2n+1)$\fts{($n\geq 4$)} & $\frac{4n}{2n-1}$ & $\frac{4n}{2n-1}$ & H.stable & l.stable\\
\hline
C$_n$ & $\sfSp(n)$\fts{($n \geq 3$))} & $\frac{2n+1}{n+1}$ & $\frac{4n-1}{2(n+1)}$ & H.unstable & l.unstable \\
\hline
D$_n$ & $\Spin(2n)$\fts{($n\geq 4$)} & $\frac{2n-1}{n-1}$ & $\frac{2n-1}{n-1}$ & H.stable & l.stable \\
\hline
E$_6$ & $\sfE_6$ & $\frac{26}{9}$ &  $\frac{17}{6}$ & H.stable & l.stable \\
\hline
E$_7$ & $\sfE_7$ & $\frac{19}{6}$ & $3$ & H.stable & l.stable \\
\hline
E$_8$ & $\sfE_8$ & $4$ & $\frac{47}{15}$ & H.stable & l.stable \\
\hline
F$_4$ & $\sfF_4$ & $\frac{8}{3}$ & $\frac{8}{3}$ & H.stable & l.stable \\
\hline
G$_2$ & $\sfG_2$ & $2$ &  $2$ & H.stable & n.l.stable \\
\hline
\end{tabular}
\smallskip
\caption{The first eigenvalue $\mu_{\text{fns}}$ of Laplacian on functions and $\min\set{L_i}$ for the eigenvalues of the Casimir operator on symmetric $2$-tensors I: group type. Here $\lambda$ is the Ricci curvature.} \label{tab:eigenvaluesgroups}
\end{table}

\begin{center}
\begin{longtable}{|l|l|c|c|l|l|}
\hline
type & $\sfG/\sfK$ & $-\lambda^{-1}\mu_{\text{fns}}$ & $\min \lambda^{-1}L_i $ & H.stability & l.stability \\
\hline \hline
\endfirsthead

\multicolumn{6}{c}
{{\tablename\ \thetable{} -- \textsl{continued from previous page}}} \\
\hline 
type & $\sfG/\sfK$ & $-\lambda^{-1}\mu_{\text{fns}}$ & $\min \lambda^{-1}L_i$ & H.stability & l.stability \\
\hline \hline
\endhead

\multicolumn{6}{|r|}{{\textsl{Continued on next page}}} \\ \hline
\endfoot

\caption{The first eigenvalue $\mu_{\text{fns}}$ of Laplacian on functions and $\min\set{L_i}$  for the eigenvalues of the Casimir operator on symmetric $2$-tensors II: non-group type $\sfG /\sfK$. Here $\lambda$ is the Ricci curvature.} \label{tab:eigenvaluesGK} 

\endlastfoot

A I & $\sfSU(n)/\sfSO(n)$\footnotesize{($n \geq 3$)}  & $\frac{2(n-1)(n+2)}{n^2}$ & $2$ & i.d. & n.l.stable \\
\hline
\multirow{2}{*}{A II} & $\sfSU(4)/\sfSp(2)=\sph^5$ & $\frac{5}{4}$ & $3$ & H.stable & l.stable \\
& $\sfSU(2n)/\sfSp(n)$\fts{($n\geq 3$)} & $\frac{(2n+1)(n-1)}{n^2}$ & $2$ & i.d. & l.unstable \\
\hline
\multirow{2}{*}{A III} & $\frac{\sfU(p+1)}{\sfU(p)\times \sfU(1)} = \Cpx \mathrm{P}^p$ & $2$  &  $2$ & H.stable & n.l.stable \\ 
& $\frac{\sfU(p+q)}{\sfU(p) \times \sfU(q)}$\footnotesize{($p\geq q \geq 2$)} & $2$ & $2$ & i.d. & n.l.stable \\
\hline 
\multirow{6}{*}{B I, B II} 
& $\frac{\sfSO(2q+1)}{\sfSO(2q)} = \sph^{2q}$, \fts{($q\geq 1$)} & $\frac{2q}{2q-1}$ & $\frac{4q+2}{2q-1}$ & H.stable & l.stable \\
& $\frac{\sfSO(5)}{\sfSO(3)\times \sfSO(2)}$ & $2$ & $\frac{4}{3}$ & H.unstable & l.unstable \\
& $\frac{\sfSO(2p+3)}{\sfSO(2p+1)\times \sfSO(2)}$ \footnotesize{($p\geq 2$)} & $2$ & $\frac{8}{2p+1}$ & H.stable & n.l.stable \\
& $\frac{\sfSO(7)}{\sfSO(3)\times\sfSO(4)}$ & $\frac{12}{5}$ & $\frac{8}{5}$ & H.stable & l.stable \\
& $\frac{\sfSO(2q+3)}{\sfSO(3)\times \sfSO(2q)}$ \footnotesize{($q\geq 3$)} & $\frac{4q+6}{2q+1}$ & $\frac{8}{2q+1}$ & H.stable  & l.stable \\
& $\frac{\sfSO(2p+2q+1)}{\sfSO(2p+1)\times \sfSO(2q)}$ \footnotesize{($p\geq 2, q\geq 2$)} & $\frac{4p+4q+2}{2p+2q-1}$ & $\frac{8}{2p+2q-1}$ & H.stable & l.stable \\
\hline
C I & $\sfSp(n)/\sfU(n)$\fts{($n\geq 3$)} & $2$ & $\frac{2n}{n+1}$ & H.unstable & l.unstable \\
\hline 
\multirow{4}{*}{C II} &$\frac{\sfSp(2)}{\sfSp(1)\times \sfSp(1)} = \sph^4$ & $\frac{4}{3}$ & $\frac{10}{3}$ & H.stable & l.stable \\
& $\frac{\sfSp(3)}{\sfSp(2) \times \sfSp(1)}=\Qua \mathrm{P}^2$ & $\frac{3}{2}$ & $\frac{3}{2}$ & unknown & l.unstable \\
& $\frac{\sfSp(p+1)}{\sfSp(p) \times \sfSp(1)} = \Qua \mathrm{P}^p$\fts{($p\geq 3$)} & $\frac{2(p+1)}{p+2}$ & $\frac{2(p+1)}{p+2}$ & H.stable  & l.unstable \\
& $\frac{\sfSp(p+q)}{\sfSp(p) \times \sfSp(q)}$\fts{($p\geq q \geq 2$)} & $\frac{2(p+q)}{p+q+1}$ & $\frac{2(p+q)}{p+q+1}$ & unknown & l.unstable \\
\hline
\multirow{6}{*}{D I, D II} & $\frac{\sfSO(2p+2)}{\sfSO(2p+1)}= \sph^{2p+1}$ \fts{($p \geq 3$)} & $\frac{2p+1}{2p}$ & $\frac{2(p+1)}{p}$ & H.stable & l.stable \\
& $\frac{\sfSO(8)}{\sfSO(5)\times \sfSO(3)}$ & $\frac{5}{2}$ & $\frac{5}{2}$  & H.stable & l.stable \\
& $\frac{\sfSO(2q+2)}{\sfSO(2q)\times \sfSO(2)}$ \footnotesize{($q\geq 3$)} & $2$ & $2$  & H.stable & n.l.stable \\
& $\frac{\sfSO(2q)}{\sfSO(q)\times \sfSO(q)}$ \footnotesize{($q\geq 4$)} & $\frac{2q}{q-1}$  & $\frac{2q}{q-1}$  & H.stable & l.stable \\
& $\frac{\sfSO(2q+2)}{\sfSO(q+2)\times \sfSO(q)}$ \footnotesize{($q\geq 4$)} & $\frac{2q+2}{q}$ & $\frac{2q+2}{q}$ & H.stable & l.stable \\
& $\frac{\sfSO(2n)}{\sfSO(2n-q)\times \sfSO(q)}$ \footnotesize{($n-2\geq q\geq 3$)} & $\frac{2n}{n-1}$ &  $\frac{2n}{n-1}$ & H.stable  & l.stable \\
\hline 
D III & $\sfSO(2n)/\sfU(n)$\footnotesize{($n \geq 5$)} & $2$ & $2$  & H.stable & n.l.stable \\
\hline 
E I & $\sfE_6/[\sfSp(4)/\set{\pm I}]$ & $\frac{28}{9}$ & $3$  & H.stable & l.stable \\
\hline
E II & $\sfE_6/\sfSU(2) \cdot \sfSU(6)$ & $3$ & $3$ & H.stable & l.stable \\
\hline
E III & $\sfE_6/\sfSO(10)\cdot \sfSO(2)$ & $2$ & $2$  & H.stable & n.l.stable \\
\hline
E IV & $\sfE_6/\sfF_4$ & $\frac{13}{9}$ & $\frac{13}{9}$ & i.d. & l.unstable \\
\hline
E V & $\sfE_7/[\sfSU(8)/\set{\pm I}]$ & $\frac{10}{3}$ & $\frac{28}{9}$ & H.stable & l.stable \\
\hline
E VI & $\sfE_7/\sfSO'(12) \cdot \sfSU(2)$ & $\frac{28}{9}$ & $\frac{28}{9}$ & H.stable & l.stable \\
\hline
E VII & $\sfE_7/\sfE_6 \cdot \sfSO(2)$ & $2$ & $2$ & H.stable & n.l.stable \\
\hline
E VIII & $\sfE_8/\sfSO'(16)$ & $\frac{62}{15}$ & $\frac{16}{5}$ & H.stable & l.stable \\
\hline
E IX & $\sfE_8/\sfE_7 \cdot \sfSU(2)$ & $\frac{16}{5}$ & $\frac{16}{5}$ & H.stable & l.stable \\
\hline
F I & $\sfF_4/\sfSp(3) \cdot \sfSU(2)$ & $\frac{26}{9}$ & $\frac{26}{9}$ & H.stable & l.stable \\
\hline
F II & $\sfF_4/\Spin(9)$ & $\frac{4}{3}$ & $\frac{4}{3}$ & unknown & l.unstable \\
\hline
G & $\sfG_2/\sfSO(4)$ & $\frac{7}{3}$ & $\frac{7}{3}$ & H.stable  & l.stable \\
\hline
\end{longtable}
\end{center}

\medskip{}



\end{document}